\newtheorem{theorem}{Theorem}
\newtheorem{theorem2}{Theorem2}
\newtheorem{theorem3}{Theorem3}
\newtheorem{prop}{Proposition}
\theoremstyle{definition}
\newtheorem{lemma}[theorem3]{Lemma}
\newtheorem{remark}[theorem]{Remark}
\newtheorem{example}[theorem2]{Example}
\DeclareMathOperator*{\argmin}{arg\,min}
\newcommand*{\centerfloat}{%
  \parindent \z@
  \leftskip \z@ \@plus 1fil \@minus \marginparwidth
  \rightskip \leftskip
  \parfillskip \z@skip}
\newcommand{\algorithmfootnote}[2][\footnotesize]{
  \let\old@algocf@finish\@algocf@finish
  \def\@algocf@finish{\old@algocf@finish
    \leavevmode\rlap{\begin{minipage}{\linewidth}
    #1#2
    \end{minipage}}%
  }%
}
\DeclarePairedDelimiter\abs{\lvert}{\rvert}
\newcommand{\norm}[1]{\left\lVert#1\right\rVert}
\journal{}
\begin{document}
\begin{frontmatter}

\title{Robust Path Recommendations During Public Transit Disruptions Under Demand Uncertainty}


\author[label1]{Baichuan Mo\corref{mycorrespondingauthor}}
\author[label3]{Haris N. Koutsopoulos}
\author[label5]{Zuo-Jun Max Shen}
\author[label4]{Jinhua Zhao}
\address[label1]{Department of Civil and Environmental Engineering, Massachusetts Institute of Technology, Cambridge, MA 02139}
\address[label3]{Department of Civil and Environmental Engineering, Northeastern University, Boston, MA 02115}
\address[label5]{Department of Industrial Engineering and Operations Research, University of California, Berkeley, Berkeley, CA 94720}
\address[label4]{Department of Urban Studies and Planning, Massachusetts Institute of Technology, Cambridge, MA 20139}

\cortext[mycorrespondingauthor]{Corresponding author}

\begin{abstract}
When there are significant service disruptions in public transit systems, passengers usually need guidance to find alternative paths. This paper proposes a path recommendation model to mitigate congestion during public transit disruptions. Passengers with different origins, destinations, and departure times are recommended with different paths such that the system travel time is minimized.  We model the path recommendation problem as an optimal flow problem with uncertain demand information. To tackle the lack of analytical formulation of travel times due to capacity constraints, we propose a simulation-based first-order approximation to transform the original problem into a linear program. Uncertainties in demand are modeled using robust optimization to protect the path recommendation strategies against inaccurate estimates. A real-world rail disruption scenario in the Chicago Transit Authority (CTA) system is used as a case study. Results show that even without considering uncertainty, the nominal model can reduce the system travel time by 9.1\% (compared to the status quo), and outperforms the benchmark capacity-based path recommendation.  The average travel time of passengers in the incident line (i.e., passengers receiving recommendations) is reduced more (-20.6\% compared to the status quo). After incorporating the demand uncertainty, the robust model can further reduce system travel times. The best robust model can decrease the average travel time of incident-line passengers by 2.91\% compared to the nominal model. The improvement of robust models is more prominent when the actual demand pattern is close to the worst-case demand. 
\end{abstract}

\begin{keyword}
Path recommendation; Robust optimization; Rail disruptions; Demand uncertainty
\end{keyword}

\end{frontmatter}


\section{Introduction}\label{intro}
\subsection{Background}
Public transit (PT) systems play an important role in urban mobility. However, with aging systems, continuous expansion, and near-capacity operations, service disruptions often occur. These incidents may result in delays, cancellation of trips, and economic losses \citep{cox2011transportation}.

This study considers significant service disruptions in public transit systems where the service (or line/route) is interrupted for a relatively long period of time (e.g., 1 hour). During a disruption, affected passengers need to find an alternative path or use other travel modes (such as transfer to another bus route). However, due to a lack of knowledge of the system state (especially during incident time), the alternative routes chosen by passengers may not be optimal or even cause more congestion \citep{mo2022impact}. For example, during a rail disruption, most of the passengers may choose bus routes that are parallel to the interrupted rail line as an alternative. However, given the limited capacity of buses, the parallel bus line may be over-saturated and passengers have to wait for a long time to board due to being denied boarding (or left behind).  

\subsection{Objectives and Challenges}
One of the strategies to better guide passengers is to provide path recommendations so that the passenger flows are re-distributed in a better way and the system travel times are reduced. This can be seen as solving an \textbf{optimal passenger flow distribution (or assignment) problem} over a public transit network. However, there are several challenges to this problem. 
\begin{itemize}
    \item First, as the objective is to reduce the system travel time, an analytical formulation to calculate passengers' travel times is needed. However, a passenger's waiting times at the boarding and transfer stations are not only determined by other waiting passengers but also those who already boarded the same line as they reduce the vehicle's capacity \citep{de1993transit}. This complicated interaction makes it difficult to have an analytical formulation for passengers' travel time when the left behind is not negligible (which is usually the case during service disruptions). More details on this challenge are elaborated in Section \ref{sec_travel_time_liter}.
    \item Second, there are many uncertainties in the system, such as the number of passengers using the PT system during incidents (i.e., demand uncertainty), incident duration, and whether passengers would follow the recommendations or not (i.e., behavior uncertainty). Previous studies have not considered uncertainties in modeling an optimal passenger flow problem.
\end{itemize}

This study aims to propose a path recommendation model to reduce crowding during public transit disruptions, also taking into account uncertainties due to inaccurate demand estimates. Different from previous recommendation systems that focus on maximizing individual preferences, this study targets a system objective by minimizing the total travel time of all passengers (including those who are not in the incident line/area). To address the aforementioned first challenge, we propose a simulation-based linearization to convert the total system travel time to a linear function of path flows using a first-order approximation, which leads to a tractable optimization problem. For the second challenge, this study focuses on the demand uncertainty (i.e., how many passengers will use the PT system during a service disruption) and models it within the robust optimization (RO) framework. The proposed approach is applied in a case study using data from the Chicago Transit Authority (CTA) system during a real-world urban rail disruption. 

The main contributions of this paper are as follows:
\begin{itemize}
    \item To tackle the non-analytical system travel time calculation, we propose a simulation-based linearization to convert the total system travel time to a linear function of path flows using first-order approximation. Importantly, we utilize the physical interaction between passengers and vehicles in a public transit system to efficiently calculate the gradient (i.e., marginal change of travel time) without running the simulation multiple times (as opposed to traditional black-box optimization). 
    \item We use RO to model the demand uncertainty which protects the model against inaccurate demand estimation. Specifically, we derive the closed-form robust counterpart with respect to the intersection of one ellipsoidal and three polyhedral uncertainty sets. These uncertainties capture the demand variations and the potential demand reduction during an incident. We also provide a feasible way of combining historical and survey data to quantify the uncertainty parameters.
\end{itemize}

The remainder of this paper is organized as follows. The literature review is presented in Section \ref{sec_liter}. In Section \ref{method}, we describe the problem and discuss the solution methods.  Section \ref{sec_model_exten} discusses model extensions and generalizability. We apply the proposed framework to the CTA system as a case study in Section \ref{sec_case_study}. The model results are analyzed in Section \ref{sec_results}. Finally, we conclude the paper and summarize the main findings in Section \ref{sec_conclusion}.

\section{Literature review}\label{sec_liter}
\subsection{Supply-side incident management}
During a disruption, transit operators usually need to adjust services such as re-schedule timetables, re-route services, or design shuttle buses. \citet{jespersen2009disruption} mention that the disruption management process often involves solving three interrelated problems sequentially: timetable adjustment, rolling stock rescheduling, and crew rescheduling. These are supply-side incident management strategies that are different from path recommendations (demand side). Supply-side strategies are widely explored in the literature. For example, timetable rescheduling has been explored from both train-oriented \citep{d2008reordering, d2009advanced, corman2010tabu, corman2012bi, corman2014dispatching, louwerse2014adjusting, zhan2015real} and passenger-oriented \citep{schobel2007integer, schachtebeck2010wait, dollevoet2012delay, kroon2015rescheduling, gao2016rescheduling} aspects, where the former pays more attention to the details of the rail system and the handling of disruptions or disturbances, focusing on minimizing the delays of trains or the number of canceled trains. The latter aims at minimizing passengers' total delay after a disruption or disturbance. For shuttle bus designs, \citet{kepaptsoglou2009bus} propose a methodological framework for planning and designing an efficient bus bridging network. \citet{jin2016optimizing} use a column generation procedure to dynamically generate demand-responsive candidate bus routes for shuttle bus design. A more comprehensive review of supply-side recovery models and algorithms for real-time railway disturbance and disruption management can be found in \citet{cacchiani2014overview}.

\subsection{Path recommendations during incidents}
Most previous studies on path recommendations under incidents were conducted at a single OD level. That is, the main objective is to find available routes or the shortest path given an OD pair when the network is interrupted by incidents. For example, \citet{bruglieri2015real} designed a trip planner to find the fastest path in the public transit network during service disruptions based on real-time mobility information. \citet{bohmova2013robust} developed a routing algorithm in urban public transportation to find reliable journeys that are robust against system delays. \citet{roelofsen2018assessing} provided a framework for generating and assessing alternative routes in case of disruptions in urban public transport systems. To the best of the authors' knowledge, none of the previous studies have considered path recommendations at the system level, that is, providing path recommendations for passengers of different OD pairs and with different departure times so that the system travel time is reduced.

\subsection{Passenger evacuation under emergencies}
Providing path recommendations during disruptions is related to the topic of passenger evacuation under emergencies. The objective of evacuation is usually to minimize the total evacuation time. In general, these papers can be categorized into micro-level and macro-level based on how passenger flows are modeled and the spatial scope of the study area. 

The micro-level studies usually use an agent-based simulation model to evaluate different evacuation strategies within some infrastructure. For example, \citet{wang2013simulation} simulated passenger evacuation under a fire emergency in Metro stations.  \citet{chen2017modelling} developed four modeling approaches including a queuing model and an agent-based simulation to calculate the evacuation
time under different emergency situations and evacuation plans. \citet{hassannayebi2020hybrid} used an agent-based and discrete-event simulation model to assess the service level performance and crowdedness in a metro station under various disruption scenarios (e.g., train failure in the tunnel and fire at the station gallery). \citet{zhou2019optimization} proposed a hybrid bi-level model to optimize the number and initial locations of leaders who guide passengers' evacuation in urban rail transit stations during an evacuation.

The macro-level studies consider a larger study area (e.g., city-level) and aim to evacuate passengers from the incident area through various transportation modes. For example, \citet{abdelgawad2012large} developed an evacuation model to determine the routing and scheduling of subway and bus transit services used to alleviate congestion pressure during the evacuation of busy urban areas. \citet{wang2019optimization} proposed an optimal bus bridging design method under operational disruptions on a single metro line. \citet{tan2020evacuating} proposes an evacuation model with urban bus networks as alternatives in the case of common metro service disruptions by jointly designing the bus lines and frequencies. 

The macro-level passenger evacuation is similar to the setup of this study, but with the following major differences. First, in our paper, the service disruption is not as severe as an emergency situation. The service will recover after a period of time and passengers are allowed to wait at a station. They do not necessarily need to cancel trips or follow evacuation plans as required in evacuation studies. Second, in this study, we assume that the service adjustment is known. The focus is on providing information to passengers to better utilize the existing resources/capacities of the system (demand side). However, the evacuation studies, since usually assuming the whole system breaks down, mainly focus on designing new services, such as routing and re-scheduling (supply side).

\subsection{Travel time calculation in public transit networks}\label{sec_travel_time_liter}
Passengers' travel time has two components: in-vehicle time and waiting time. In-vehicle time is not affected by passenger flows once passengers are onboard, thus is easy to model (e.g., modeled as a constant). However, the waiting time is more complicated to calculate if the system is congested with left behind due to capacity constraints. 

Passengers' travel time is usually modeled in the context of transit assignment, using two major approaches: frequency-based (static) and schedule-based (dynamic). In the frequency-based transit assignment approach, the waiting time is either assumed to be inversely proportional to the (effective) service frequency \citep{wu1994transit, schmocker2011frequency, nielsen2000stochastic}, or modeled as a congestion function (e.g., BRP) of previously boarded flows and new arrival flows with exogenously-calibrated parameters \citep{de1993transit}. The former method does not consider the left behind, and the latter only outputs a generalized waiting cost (rather than the waiting time as the vehicle capacity is not explicitly modeled) and requires a dedicated calibration process. Therefore, the frequency-based transit assignment model is not suitable for this study because congestion and left behind are not negligible during disruptions.  
 
In terms of the schedule-based models \citep{nguyen2001modeling, hamdouch2008schedule, hamdouch2014new, schmocker2008quasi}, the waiting time can only be obtained after a dynamic network loading (or simulation) process. For example, \citet{schmocker2008quasi} used the fail-to-board probability to model the left behind. This probability is updated after each network loading and can be used to calculate the waiting time. However, in this way, the waiting time is still constant within each iteration. There is no direct way to formulate waiting time as a function of path flows. 

Since formulating travel time as a function of path flows remains a challenge, the optimal passenger flow distribution in transit networks has no closed-form formulation. This study proposes a simulation-based first-order approximation to solve the original problem iteratively. With the proposed tractable linear programming model, uncertainties can also be incorporated.  

\subsection{Passenger queuing in over-saturated scenarios}
Since the difficulty of travel time calculation arises from the waiting time due to being left behind, we also review previous studies on modeling passenger queuing in over-saturated scenarios. Passenger left behind is usually modeled by the following nonlinear constraint:
\begin{align}
    \text{Num boarding passengers} = \min \{\text{Num waiting passengers}, \text{Remaining capacity} \}
\end{align}
This constraint is resolved by the following methods in the literature: 1) transferring to a linear constraint with binary decision variables then solved by heuristics or other algorithms \citep{gao2016rescheduling, shi2021operations}, 2) meta-heuristics (e.g., genetic algorithm (GA), sequential quadratic programming) \citep{yang2012cooperative, wang2015passenger}, 3) iterative convex programming \citep{wang2015efficient}, 4) approximate dynamic programming \citep{yin2016energy, shi2021operations}, 5) effective passenger loading time period (with binary decision variables) \citep{niu2013optimizing}. Among these methods, modeling with binary decision variables requires solving large-scale integer programming (the number of decision variables equal to the number of time intervals times the number of platforms). This is usually solved by some heuristics and may not be applicable in a large-scale network. Another category of meta-heuristics methods (like GA) is not efficient and does not well utilize domain-specific knowledge. Specifically, iterative convex programming is slightly similar to our method. In each iteration, the approach fixes the number of waiting passengers and onboard passengers based on the timetable from the last iteration and a simulation model. In our study, we also have a fixed “flow pattern” from the last iteration, but we also capture the “marginal change” in flows using a first-order approximation (see Section \ref{sub_foa} for details).   

\subsection{Simulation-based optimization}
Simulation-based optimization methods are designed to solve optimization problems where the objective function and its derivatives are difficult and expensive to evaluate. These methods have been widely used to solve the problems of congestion pricing \citep{chen2016time, he2017optimal}, traffic signal control \citep{osorio2013simulation, osorio2015urban, osorio2015energy, chong2018simulation}, transit scheduling \citep{zhang2017simulation}, route choice estimation \citep{mo2021calibrating, mo2022ex}, ride-sharing \citep{cardin2017real}, supply chain management \citep{noordhoek2018simulation}, liner shipping \citep{dong2009container} and more. In general, there are three classes of methods for the SBO, including the direct search method, the gradient-based method, and the response surface (meta-model) method \citep{osorio2013simulation}. In this paper, the proposed simulation-based first-order approximation is similar to a combination of the gradient-based and response surface (meta-model) methods. Specifically, we use the first-order approximation as a meta-model for the original objective function. In terms of the gradient calculation, instead of calling the simulation multiple times for the gradient evaluation (e.g., simultaneous perturbation stochastic approximation \citep{spall1997one}), we propose an efficient way to calculate the gradient based on its physical meaning. This greatly saves computational time compared to typical gradient-based methods. 

\subsection{Robust optimization (RO)}

RO is a common approach to handling data uncertainty in optimization problems. RO generally needs to first specify a scope of some uncertain parameters. The scope is referred to as the ``uncertainty set''. The optimization problem is conducted over the worst-case realizations within the specified uncertainty set. 
This method is suitable for applications where there are uncertainties related to the model input parameters and when uncertainties can lead to significant penalties or infeasibility in practice. Since the solutions are optimal under the worst-case scenario, we treat the outputs of RO as a robust solution. 

The solution method for RO problems involves generating a deterministic equivalent formulation, called the robust counterpart. Computational tractability of the robust counterpart has been a major practical difficulty \citep{BEN:09}.
A variety of uncertainty sets have been identified for which the robust counterpart is reasonably tractable \citep{bertsimas2011theory}. 

The studies on RO have grown substantially over the past decades. Seminal papers include \citep{ben1998robust}, \citep{ben1999robust} and \citep{bertsimas2004price}. 
Comprehensive surveys on the early literature can be found in \citet{BEN:09} and \citet{bertsimas2011theory}. The development of the RO methodology has allowed researchers to tackle problems with data uncertainty in a range of fields. Examples include renewable energy network design \citep{xiong2016distributionally}, supply chain operations \citep{ma2018distribution}, health care logistics \citep{wang2019distributionally}, and ride-hailing \citep{guo2021robust}. 

However, to the best of the authors' knowledge, no existing papers have incorporated RO techniques into path recommendations during service disruptions.  This research gap is important to address given the potentially inaccurate estimates of demand in public transit networks during an incident.

\section{Methodology}\label{method}

\subsection{Event-based public transit simulator}
Before introducing the path recommendation, we first describe an event-based public transit simulator that is used across this study \citep{mo2020capacity}, especially for simulation-based linearization.  

\subsubsection{Simulator design}
Figure \ref{fig_model} summarizes the main structure of the simulator. The inputs for the simulator are time-dependent OD demand (or smart card data), path shares, network structure, and train movement data (or timetable). Three objects are defined: trains, queues, and passengers. Trains are characterized by routes, train ID, current locations, and capacities. Passengers are queued based on their arrival times. Three different types of passengers are represented: left-behind passengers who were denied boarding from previous trains, new tap-in passengers from outside the system, and new transfer passengers from other lines. The left-behind passengers are usually at the head of the queue. 

An event-based modeling framework is used to load the passengers onto the network. Two types of events are considered: train arrivals and train departures. The events are sorted by time and processed sequentially until all events are successfully completed during the analysis period. Train event lists (arrivals and departures) are generated according to the actual train movement data or timetable. Each event contains a train ID, occurrence time, and location (platform). Passengers are assigned to a path based on the corresponding input path shares. Note that in this study, a ``path'' is defined with specific boarding and transfer stations and lines. We assume passengers following a path will only board vehicles belonging to the specific line, even though there are multiple lines that serve a trip segment. Hence, there is no ``common line'' problem \citep{de1993transit} in this study because ``common lines'' will be treated as different paths. 

\begin{figure}[htb] 
\centering
\includegraphics[width= 0.7\linewidth]{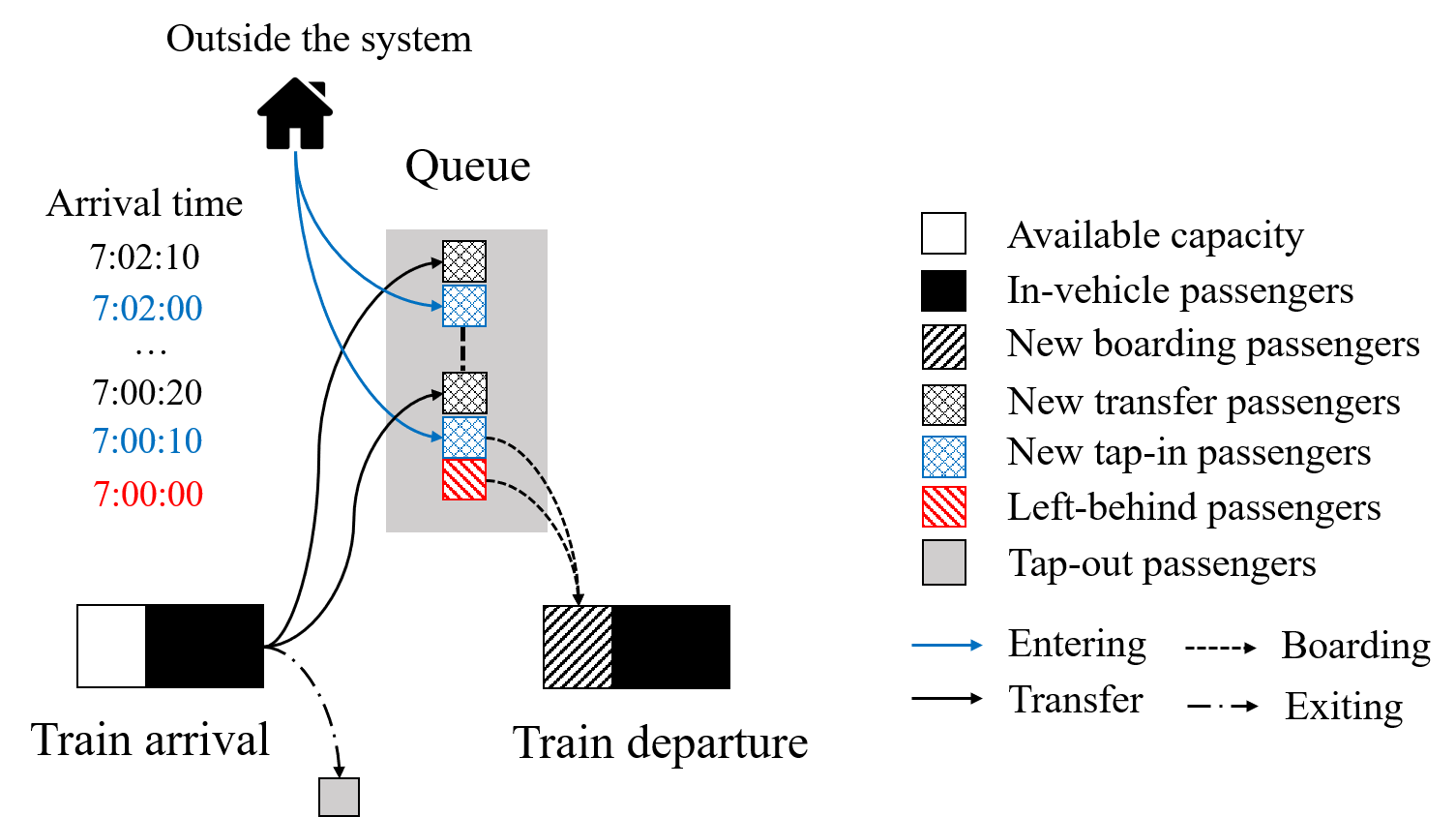}
\caption{Structure of the network loading model (adapted from \citet{mo2020capacity}}
\label{fig_model}
\end{figure}

For an arrival event, the train offloads passengers who reach their destination or need to transfer at the station and updates its state (e.g. train load and in-vehicle passengers). For passengers who reach their destinations, their tap-out times are calculated by adding their egress time. For those who transfer at the station, their arrival times at the next platform are calculated based on the transfer time. The transfer passengers are added to the waiting queue in order of their arrival times at the next platform. 

For departure events, the queue on the platform is updated by the new tap-in passengers, that is, passengers who arrive at the platform after the last train departed are added to the queue based on their arrival times. Passengers board the train according to a First-Come-First-Serve (FCFS) discipline until the train reaches its capacity. Passengers who cannot board are left behind and wait in the queue for the next train. The states of the train and the waiting queue are updated accordingly.

The simulator can record every passenger's trajectory during the whole travel process, including tap-in time, platform arrival time, boarding time, alighting time, tap-out time, etc. 

\subsubsection{Simulating service disruptions}
Given a service disruption, the event list is modified to incorporate the incident's impact on the supply side. Specifically, all incidents' impacts can be reflected by changes in vehicles' arrival and departure times. For example, the blockage of a rail line can be represented by some vehicles in the line having long dwell times at the corresponding stations during the incident period. The dispatching of shuttle buses can be seen as adding a new set of events (vehicle arrivals and departures) associated with the new bridging route. The headway adjustment of existing routes can also be captured by the new vehicle arrival and departure times. In this way, the event-based simulator can conveniently model service disruptions without changing the framework. It is worth noting that using the change of timetable to capture the incident impact on supply is also applicable to multi-platform scenarios (i.e., a platform serving different lines or different types of train capacities). Different from typical re-scheduling problems where the design of the new timetable needs to consider the train conflicts in the multi-platform scenario,  in this study, the timetable is given, where the possible conflicts are already considered in the new timetable. In addition, the timetable change can also capture the ``partially blocked'' platform. Details are illustrated in \ref{sec_sim_supply_change}.

From the passenger side, when an incident happens, all passengers in blocked trains are offloaded to the nearest platform. Depending on the input path choices (i.e., recommendation strategies), offloading passengers are re-assigned to a new alternative path and join the queues at the corresponding boarding station. After reassigning the offloading passengers, the simulator continues to run from the incident time to the end of the simulation period (note that passengers who have not entered the system when the incident occurs will have a new path choice depending on the input path choices). 

\subsection{Problem description}
Consider a service disruption in an urban rail system starting at time $T_s$ and ending at $T_e$. During the disruption, some stations in the incident line (or the whole line) are blocked. Passengers in the blocked trains are usually offloaded to the nearest platforms. To respond to the incident, some changes in the operations are made, such as dispatching shuttle buses, rerouting existing services, short-turning in the incident line, headway adjustment, etc. Assume that we have all information about the operating changes\footnote{That is, we assume during the disruption, operators would first change the supply to accommodate for the disruption, then provide path recommendations that incorporate the supply changes.}. These changes define a new PT service network and alternative path sets. Our objective is to design an origin-destination (OD) based recommendation system. That is, when the incident happens, passengers can use their phones, websites, or electrical boards at stations to access the recommendation system. They input their \textbf{origin station, destination station, and departure time} to get a recommended path. The recommendation aims to minimize the system travel time, that is, the sum of all passengers' travel times, including passengers at nearby lines or bus routes without incidents (note that these passengers may experience additional crowding due to transfer passengers from the incident line).

Let $\mathcal{K}$ be the predetermined set of all OD pairs that may need path recommendations. $\mathcal{K}$ is defined based on whether an OD pair is affected by the incident or not. Operators usually need a period called ``response time'' (e.g., 10 to 20 minutes) to generate the service changes. Let the response time be $\eta$. We assume that the path recommendations start at $T_s + \eta$. Note that the origins for passengers who are already in the system at time $T_s + \eta$ (e.g., offloaded passengers from the blocked vehicles) is their current locations (as opposed to their initial origins such as the boarding stations). We aim to provide recommendations for passengers whose OD pairs are in $\mathcal{K}$ and departure times are in the range from $T_s + \eta$ to some time point after $T_e$, since the congestion may last longer than $T_e$ and passengers departing after $T_e$ may also need guidance. Suppose that the period of recommendation starts at a time point ($h_0$) and consists of time intervals ($h_1,...,h_H$) of equal length $\tau$ (e.g., 10 minutes). Specifically, $h_0$ represents the time point at $T_s + \eta$. Recommendations at $T_s + \eta$ focus on passengers who are offloaded from blocked vehicles or arrive between $T_s$ and $T_s+\eta$ (their departure times are $T_s + \eta$)\footnote{Note that some of those passengers may schedule their departure times after $T_s+\eta$. These passengers will be considered as demand in other time intervals}. And $h_t$ $(t\geq 1)$ represents the time interval $(T_s + \eta + (t-1)\tau, T_s + \eta + t\tau]$. Recommendations at $h_t$ $(t\geq 1)$ focus on passengers who were not in the system when the incident happened and their departure times are in ${(T_s +\eta + (t-1)\tau, T_s + \eta+ t\tau]}$ (or passengers who are in the system when the incident happens but scheduled to depart in ${(T_s +\eta+ (t-1)\tau, T_s +\eta+ t\tau]}$). Let the set of all recommendation times be $\mathcal{H} := \{h_0, h_1, ... , h_H\}$. It is worth noting that, the following description aims to solve the model at time point $h_0$ and generate path recommendations from $h_0$ to $h_H$. However, the methodology is easy to be extended to a rolling horizon implementation where the model can be solved at any given time interval $\tilde{h} \in \mathcal{H}$. In this way, the service operation and demand information can be updated over time. Details of this discussion can be found in Section \ref{sec_model_rolling}.

Given the new operations during the incident, we obtain a feasible path set $R_k$ for each OD pair $k$. Note that $R_k$ includes all feasible services that are provided by the PT operator. A path $r \in R_k$  may be waiting for the system to recover (i.e., using the incident line), or transfer to nearby bus lines, using shuttle services, etc. We do not consider non-PT modes, such as Uber or driving for the following reasons: 1) The study aims to design a path recommendation system used by PT operators to provide path alternative recommendations to all PT users. Considering non-PT modes needs the supply information of all other travel modes and even consider non-PT users (such as the impact of traffic congestion on drivers), which is beyond the scope of this study. Future research may consider a multi-modal path recommendation system. 2) Passengers using non-PT modes can be simply treated as demand reduction for the PT system. So their impact on the PT system is still captured. 

Let $d_{hk}$ be the number of passengers using the PT system with OD pair $k \in \mathcal{K}$ and departure time $h \in \mathcal{H}$. It can be treated as the normal demand minus the number of passengers leaving the PT system. As we do not have full information about future demand and the number of passengers leaving the system, $d_{hk}$ is an uncertainty variable that will be discussed in Section \ref{sec_demand_uncertainty}. Let $f_{hkr}$ be the number of passengers departing at time interval $h$ using OD pair $k$ and path $r \in R_k$. By definition:
\begin{align}
    \sum_{r\in R_k} f_{hkr} = d_{hk} \quad \forall h\in\mathcal{H}, k\in\mathcal{K}
\end{align}
Let $p_{hkr}$ be the corresponding path share of $f_{hkr}$ (i.e., $p_{hkr} = f_{hkr}/d_{hk}$ and $ \sum_{r\in R_k} p_{hkr} = 1$). For convenience of description, we define $\mathcal{F} := \{(h,k,r): \forall h \in \mathcal{H}, \forall k \in \mathcal{K}, r \in R_k\}$ as the set of all path indices. Then the optimal flow problem can be formulated as:   
\begin{mini!}|s|[2]                 
    {\boldsymbol{f}, \boldsymbol{p}}
    {Z(\boldsymbol{f}) = \text{Sum of all passengers' travel time} \label{eq_opt_flow1}}
    {\label{eq_opt_flow}}
    {}
    \addConstraint{\sum_{r\in R_{k}} p_{hkr} }{= 1}{\;\;\;\forall \; h\in \mathcal{H},k\in \mathcal{K}\label{eq_const_naive1}}
    \addConstraint{f_{hkr} }{= d_{hk} \cdot p_{hkr}}{\;\;\;\forall \; (h,k,r) \in \mathcal{F} \label{const_demand_path_share}}
    \addConstraint{f_{hkr} }{\geq 0}{\;\;\;\forall \; (h,k,r) \in \mathcal{F}}
    \addConstraint{0 \leq p_{hkr} \leq 1}{}{\;\;\;\forall \; (h,k,r) \in \mathcal{F}\label{eq_const_naive2}}
\end{mini!}
where $\boldsymbol{f} := (f_{hkr})_{h,k,r\in\mathcal{F}}$ and $\boldsymbol{p} := (p_{hkr})_{h,k,r \in\mathcal{F}}$. $Z(\boldsymbol{f})$ is the system travel time which has no analytical expression. It can only be obtained after each network loading or simulation process (see Section \ref{sec_travel_time_liter}). Note that using both $\boldsymbol{f}$ and $\boldsymbol{p}$ in the optimization problem is redundant, but it is useful for explaining the methodology.

If there is no uncertainty in the system, the optimal path shares ($p_{hkr}^*$) obtained from the solution of Eq. \ref{eq_opt_flow} are the recommendation proportions. That is, for all passengers with OD pair $k$ and departure time $h$, the system will recommend them to use path $r$ with probability $p_{hkr}^*$. However, Eq. \ref{eq_opt_flow} is a conceptual formulation, it cannot be solved directly because $Z(\boldsymbol{f})$ has no analytical expression. Moreover, given the uncertainties in demand, the final recommended path shares may not be $p_{hkr}^*$. In the following sections, we elaborate on how to solve the robust ``optimal flow problem'' with demand uncertainties.

\subsection{Simulation-based linearization of the objective function}\label{sub_foa}
In this section, we propose a simulation-based linearization for the non-analytical $Z(\boldsymbol{f})$ based on a first-order approximation. $Z(\boldsymbol{f})$ can be approximated as:
\begin{align}
\hat{Z}(\boldsymbol{f}) = Z(\Tilde{\boldsymbol{f}}) + (\boldsymbol{f} - \Tilde{\boldsymbol{f}})^T\frac{\partial Z({\boldsymbol{f}})}{\partial\boldsymbol{f}}|_{\boldsymbol{f} = \Tilde{\boldsymbol{f}}}
\end{align}
where $\hat{Z}(\boldsymbol{f})$ is the first-order approximation of ${Z}(\boldsymbol{f})$. $\Tilde{\boldsymbol{f}}$ is a reference flow for the first-order approximation. $Z(\Tilde{\boldsymbol{f}})$ is the system travel time estimated by simulation with $\Tilde{\boldsymbol{f}}$ as input. $\frac{\partial Z({\boldsymbol{f}})}{\partial\boldsymbol{f}} = (\frac{\partial Z({\boldsymbol{f}})}{\partial f_{hkr} })_{{h,k,r} \in \mathcal{F}}$ is the gradient vector of $Z(\boldsymbol{f})$. As $\Tilde{\boldsymbol{f}}$ and $Z(\Tilde{\boldsymbol{f}})$ are pre-determined, the only unknown part is $\frac{\partial Z({\boldsymbol{f}})}{\partial\boldsymbol{f}}|_{\boldsymbol{f} = \Tilde{\boldsymbol{f}}}$. Notice that $\frac{\partial Z({\boldsymbol{f}})}{\partial f_{hkr} }|_{\boldsymbol{f} = \Tilde{\boldsymbol{f}}}$ represents the change of system travel time caused by one unit of flow change in $f_{hkr}$. It can be approximated as:
\begin{align}
\frac{\partial Z({\boldsymbol{f}})}{\partial f_{hkr} }|_{\boldsymbol{f} = \Tilde{\boldsymbol{f}}} \approx \frac{Z({\Tilde{\boldsymbol{f}} + \boldsymbol{e}_{hkr}}) - Z({\boldsymbol{\Tilde{\boldsymbol{f}}}})}{1} 
\label{eq_num_app_gra}
\end{align}
where $\boldsymbol{e}_{hkr}$ represents a vector with only the $(h,k,r)$-th element being 1 and others zero. Eq. \ref{eq_num_app_gra} represents the numerical approximation of the gradient. Now we only need to calculate $Z({\Tilde{\boldsymbol{f}} + \boldsymbol{e}_{hkr}}) - Z({\boldsymbol{\Tilde{\boldsymbol{f}}}})$. A naive method to do that is to run a simulation with ${\Tilde{\boldsymbol{f}} + \boldsymbol{e}_{hkr}}$ as input. However, as running the simulation is time-consuming, this method is not efficient. Note that since we already run a simulation with $\Tilde{\boldsymbol{f}}$ as input, it is possible to directly calculate the marginal change due to the additional unit of flow (i.e., calculate the additional travel time increase to the system if one additional flow is added to $\Tilde{f}_{hkr}$). 

Consider an example journey of $\tilde{f}_{hkr}$ in Figure \ref{fig_marginal_cost}. Let $\mathcal{M}_{hkr}$ be the set of passengers composing the flow of $\tilde{f}_{hkr}$ (i.e., the green passengers in Figure \ref{fig_marginal_cost}). These passengers have origin station $a_1$ and destination station $a_7$, and the path includes a transfer from station $a_4$ to station $a_5$. Let the average travel time of $\tilde{f}_{hkr}$ be $T_{hkr}^{\text{A}}(\boldsymbol{\tilde{f}})$. Suppose that one more passenger is added to $\tilde{f}_{hkr}$. 

\begin{figure}[htb]
\centering
\includegraphics[width = 0.5\linewidth]{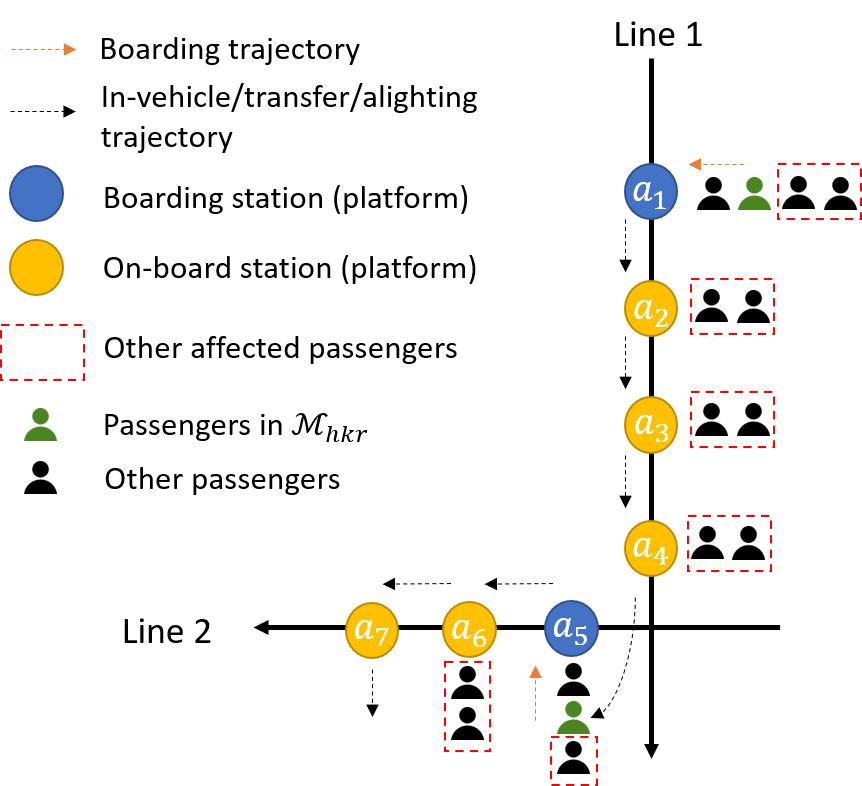}
\caption{Explanation for the impact of adding an additional one unit flow to the system}
\label{fig_marginal_cost}
\end{figure}

First of all, the system travel time is increased by $T_{hkr}^{\text{A}}(\boldsymbol{\tilde{f}})$ due to the increase in the flow amount. Note that considering the marginal calculation, we ignore the impact of the added passenger on the increase in $T_{hkr}^{\text{A}}(\boldsymbol{\tilde{f}})$. Besides, all passengers in the red-dashed square may experience higher travel times. Passengers at station $a_1$ and $a_5$ who queue behind the green passenger may have additional waiting time if the train that $\mathcal{M}_{hkr}$ used is full after departure (under the simulation results of $\boldsymbol{\tilde{f}}$), because the increase of the flow by one in $\tilde{f}_{hkr}$ will occupy one available capacity for these waiting passengers, and one of them will have to board the next train (i.e., wait for one more headway). Denote the total increase in system travel time for passengers queuing behind $\mathcal{M}_{hkr}$ as $T_{hkr}^{\text{Q}}(\boldsymbol{\tilde{f}})$. The detailed calculation of $T_{hkr}^{\text{Q}}(\boldsymbol{\tilde{f}})$ is shown in \ref{sec_sim_TQ}.

For passengers waiting at stations where $\mathcal{M}_{hkr}$ are already on-board (referred to as on-board stations, e.g., station $a_2$), adding one flow to $\tilde{f}_{hkr}$ reduces the available capacity when the vehicle arrives at these on-board stations. The queuing passengers at the onboard stations may not be able to board due to the reduction of capacity. Specifically, if a vehicle is full when it departs from an onboard station under flow pattern $\tilde{\boldsymbol{f}}$, adding one passenger to $\tilde{f}_{hkr}$ makes one passenger waiting at the on-board station unable to board his/her original boarded vehicle. And the system travel time is increased by one headway for each of these onboard stations. Denote the travel time increase for passengers waiting at on-board stations as $T_{hkr}^{\text{O}}(\boldsymbol{\tilde{f}})$. The detailed calculation of $T_{hkr}^{\text{O}}(\boldsymbol{\tilde{f}})$ is shown in \ref{sec_sim_TO}.

Therefore, in this way, depending on whether the vehicle is full or not under flow pattern $\tilde{\boldsymbol{f}}$, the increase in system travel time due to adding one passenger to $\tilde{f}_{hkr}$ can be calculated without running the simulation again. These increases come from three parts: 1) the average travel time of $\mathcal{M}_{hkr}$ due to increasing in flow amount (i.e., $T_{hkr}^{\text{A}}(\boldsymbol{\tilde{f}})$), 2) the additional waiting time for passengers queuing behind $\mathcal{M}_{hkr}$ (i.e., $T_{hkr}^{\text{Q}}(\boldsymbol{\tilde{f}})$), and 3) the additional waiting time for passengers queuing at $\mathcal{M}_{hkr}$'s on-board stations  (i.e., $T_{hkr}^{\text{O}}(\boldsymbol{\tilde{f}})$). Specifically, we have
\begin{align}
Z({\Tilde{\boldsymbol{f}} + \boldsymbol{e}_{hkr}}) - Z({\boldsymbol{\Tilde{\boldsymbol{f}}}}) = T_{hkr}^{\text{A}}(\boldsymbol{\tilde{f}}) + T_{hkr}^{\text{Q}}(\boldsymbol{\tilde{f}}) + T_{hkr}^{\text{O}}(\boldsymbol{\tilde{f}})
\end{align} 

Consequently, $\frac{\partial Z({\boldsymbol{f}})}{\partial\boldsymbol{f}}|_{\boldsymbol{f} = \Tilde{\boldsymbol{f}}}$ can be obtained from Eq. \ref{eq_num_app_gra}. Define $\boldsymbol{\beta}(\Tilde{\boldsymbol{f}}) := \frac{\partial Z({\boldsymbol{f}})}{\partial\boldsymbol{f}}|_{\boldsymbol{f} = \Tilde{\boldsymbol{f}}}$. Then the objective function becomes:
\begin{align}
    \hat{Z}(\boldsymbol{f})=  Z(\Tilde{\boldsymbol{f}}) + \boldsymbol{\beta}(\Tilde{\boldsymbol{f}})^T (\boldsymbol{f} - \Tilde{\boldsymbol{f}}) 
    \label{eq_obj_new}
\end{align}
where $\boldsymbol{\beta}(\Tilde{\boldsymbol{f}}) = (\beta_{hkr})_{h,k,r \in \mathcal{F}}$ and $\beta_{hkr} = \frac{\partial Z({\boldsymbol{f}})}{\partial f_{hkr} }|_{\boldsymbol{f} = \Tilde{\boldsymbol{f}}}$. Eq. \ref{eq_obj_new} is a linear function of $\boldsymbol{f}$, which supports for addressing uncertainties in the optimization problem.


\subsection{Demand uncertainty}\label{sec_demand_uncertainty}

The uncertainty of $d_{hk}$ comes from two different parts. The first is the inherent demand variations across different days, and the second is the uncertainty in how many passengers leave the PT system during the incident. In this section, these two uncertainties are considered as a whole by introducing an ellipsoidal uncertainty set and three polyhedral uncertainty sets.

From constraint \ref{const_demand_path_share}, we can substitute $f_{hkr} = d_{hk}\cdot p_{hkr}$ to the objective function and rewrite Eq. \ref{eq_obj_new} as:
\begin{align}
   \hat{Z}(\boldsymbol{f})= \hat{Z}(\boldsymbol{p}) = Z(\Tilde{\boldsymbol{f}}) + \sum_{(h,k,r) \in \mathcal{F}}\beta_{hkr} \cdot (d_{hk}\cdot p_{hkr} - \tilde{f}_{hkr}) 
\end{align}
Note that $\beta_{hkr}$ is a function of $\Tilde{\boldsymbol{f}}$, for simplicity we ignore $\Tilde{\boldsymbol{f}}$ in the derivation process. 

To model the uncertainty of $d_{hk}$, we introduce an auxiliary decision variable $t$ and rewrite the optimal flow problem as:
\begin{mini!}|s|[2]                 
    {\boldsymbol{p},t}
    {t \label{eq_obj_sub1_info}}
    {\label{eq_sub1_info}}
    {}
    \addConstraint{ t \geq Z(\Tilde{\boldsymbol{f}}) + \sum_{(h,k,r) \in \mathcal{F}}\beta_{hkr} \cdot (d_{hk}\cdot p_{hkr} - \tilde{f}_{hkr}) \label{cons_t}}{}{}
    \addConstraint{\text{Constraints (\ref{eq_const_naive1}) and (\ref{eq_const_naive2})} }{}{}
\end{mini!}
Constraint \ref{cons_t} can be rewritten as
\begin{align}
\sum_{h,k}\sum_{r\in R_k}\beta_{hkr} \cdot {d}_{hk} \cdot p_{hkr}  \leq t - Z(\Tilde{\boldsymbol{f}}) +  \sum_{(h,k,r)\in \mathcal{F}}\beta_{hkr}\tilde{f}_{hkr} 
\label{cons_t2}
\end{align}
Eq. \ref{cons_t2} can be written in a matrix form as:
\begin{align}
\boldsymbol{a}^T \boldsymbol{p} \leq b
\label{cons_t3}
\end{align}
where $\boldsymbol{a} \in \mathbb{R}^{|\mathcal{F}|}$ with the entry $a_{hkr} = \beta_{hkr} d_{hk}, \; \forall \;(h,k,r) \in \mathcal{F}$. And $b = t - Z(\Tilde{\boldsymbol{f}}) + \sum_{(h,k,r)\in \mathcal{F}}\beta_{hkr}\tilde{f}_{hkr}$. Define $\boldsymbol{d} = (d_{hk})_{h\in\mathcal{H},k\in\mathcal{K}}$.

\begin{prop}\label{prop_demand_uncertain}
If $\boldsymbol{d}$ is normally distributed with $\boldsymbol{d} \sim \mathcal{N}(\bar{\boldsymbol{d}}, \boldsymbol{\Sigma})$, then in a RO problem where constraint \ref{cons_t3} is guaranteed to be satisfied with probability of at least $1 - \varepsilon$ (i.e., $\mathbb{P}[\boldsymbol{a}^T \boldsymbol{p} \leq b] \geq 1 - \varepsilon$), the robust constraint can be formulated as:
\begin{align}
     (\boldsymbol{A} \bar{\boldsymbol{d}} + \boldsymbol{ADz})^T\boldsymbol{p} \leq b, \quad \forall \boldsymbol{z} \in \mathcal{Z}_{\text{E}}
     \label{const_z}
\end{align}
where $\boldsymbol{A} \in \mathbb{R}^{|\mathcal{F}|\times HK}$ with entry $A_{hkr,h'k'} = \beta_{hkr}$ if $h=h'$ and $k = k'$, otherwise $A_{hkr,h'k'} =0$. $\boldsymbol{D}$ is the Cholesky decomposition of $\boldsymbol{\Sigma}$ (i.e., $\boldsymbol{\Sigma }= \boldsymbol{D}  \boldsymbol{D}^T$). $\boldsymbol{z}$ are the perturbation variables (i.e., $\boldsymbol{d} =\bar{\boldsymbol{d}} + \boldsymbol{Dz}$) and $\mathcal{Z}_{\text{E}} = \left\{\boldsymbol{z} \in \mathbb{R}^{HK}: \norm{z}_2 \leq \rho_{1-\varepsilon} \right\}$ (i.e., the ellipsoidal uncertainty set). $\rho_{1-\varepsilon}$ is the $(1 - \varepsilon$)-percentile of a standard normal distribution. 
\end{prop}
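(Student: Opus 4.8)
The plan is to exploit the fact that the left-hand side of constraint \ref{cons_t3} is an affine function of the Gaussian demand vector $\boldsymbol{d}$, so the probabilistic guarantee collapses to a single deterministic second-order cone inequality, which I then recognize as exactly the worst-case (robust) constraint over the ball $\mathcal{Z}_{\text{E}}$. First I would rewrite the coefficient vector $\boldsymbol{a}$ in terms of $\boldsymbol{d}$. By the definition of $\boldsymbol{A}$, its $(h,k,r)$-th row has a single nonzero entry $\beta_{hkr}$ in column $(h,k)$, so $\boldsymbol{A}\boldsymbol{d}$ has $(h,k,r)$-th entry $\beta_{hkr}d_{hk} = a_{hkr}$, i.e. $\boldsymbol{a} = \boldsymbol{A}\boldsymbol{d}$. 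Hence $\boldsymbol{a}^T\boldsymbol{p} = (\boldsymbol{A}\boldsymbol{d})^T\boldsymbol{p} = \boldsymbol{d}^T(\boldsymbol{A}^T\boldsymbol{p})$ is a scalar linear functional of $\boldsymbol{d}$, and since $\boldsymbol{d}\sim\mathcal{N}(\bar{\boldsymbol{d}},\boldsymbol{\Sigma})$ it follows that, for any fixed $\boldsymbol{p}$, $\boldsymbol{a}^T\boldsymbol{p}\sim\mathcal{N}\big((\boldsymbol{A}\bar{\boldsymbol{d}})^T\boldsymbol{p},\,\boldsymbol{p}^T\boldsymbol{A}\boldsymbol{\Sigma}\boldsymbol{A}^T\boldsymbol{p}\big)$.

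Next I would convert the chance constraint $\mathbb{P}[\boldsymbol{a}^T\boldsymbol{p}\le b]\ge 1-\varepsilon$ into its deterministic counterpart. For a univariate normal $X\sim\mathcal{N}(\mu,\sigma^2)$, the condition $\mathbb{P}[X\le b]\ge 1-\varepsilon$ is equivalent to $\mu+\rho_{1-\varepsilon}\sigma\le b$, where $\rho_{1-\varepsilon}$ is the $(1-\varepsilon)$-quantile of the standard normal. Applying this with $\mu=(\boldsymbol{A}\bar{\boldsymbol{d}})^T\boldsymbol{p}$ and $\sigma=\sqrt{\boldsymbol{p}^T\boldsymbol{A}\boldsymbol{\Sigma}\boldsymbol{A}^T\boldsymbol{p}}$, and using the Cholesky factorization $\boldsymbol{\Sigma}=\boldsymbol{D}\boldsymbol{D}^T$ so that $\boldsymbol{p}^T\boldsymbol{A}\boldsymbol{\Sigma}\boldsymbol{A}^T\boldsymbol{p}=\norm{\boldsymbol{D}^T\boldsymbol{A}^T\boldsymbol{p}}_2^2$, the chance constraint becomes the single inequality
\[
(\boldsymbol{A}\bar{\boldsymbol{d}})^T\boldsymbol{p}+\rho_{1-\varepsilon}\norm{\boldsymbol{D}^T\boldsymbol{A}^T\boldsymbol{p}}_2\le b .
\]

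Finally, to identify this with constraint \ref{const_z}, I would invoke the support-function (Cauchy--Schwarz) identity $\max_{\norm{\boldsymbol{z}}_2\le\rho_{1-\varepsilon}}\boldsymbol{z}^T\boldsymbol{w}=\rho_{1-\varepsilon}\norm{\boldsymbol{w}}_2$ with $\boldsymbol{w}=\boldsymbol{D}^T\boldsymbol{A}^T\boldsymbol{p}$. Requiring $(\boldsymbol{A}\bar{\boldsymbol{d}}+\boldsymbol{A}\boldsymbol{D}\boldsymbol{z})^T\boldsymbol{p}\le b$ for every $\boldsymbol{z}\in\mathcal{Z}_{\text{E}}$ is the same as $(\boldsymbol{A}\bar{\boldsymbol{d}})^T\boldsymbol{p}+\max_{\boldsymbol{z}\in\mathcal{Z}_{\text{E}}}\boldsymbol{z}^T(\boldsymbol{D}^T\boldsymbol{A}^T\boldsymbol{p})\le b$, which by the identity coincides with the second-order cone inequality above; this is precisely the perturbation parametrization $\boldsymbol{d}=\bar{\boldsymbol{d}}+\boldsymbol{D}\boldsymbol{z}$ restricted to $\norm{\boldsymbol{z}}_2\le\rho_{1-\varepsilon}$. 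The two formulations are therefore equivalent, establishing the claim.

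The main obstacle is conceptual rather than computational: the argument ties together two a priori different guarantees, a probabilistic one and a deterministic worst-case one, and their exact coincidence is special to the Gaussian/ellipsoid pairing (for a general distribution the robust reformulation would only be a conservative sufficient condition). Everything else is careful bookkeeping, namely reading off the sparse block structure of $\boldsymbol{A}$ so that $\boldsymbol{A}\boldsymbol{d}=\boldsymbol{a}$, keeping the $(h,k,r)$ and $(h,k)$ index dimensions straight, and noting that the quantile and Cauchy--Schwarz steps require $\rho_{1-\varepsilon}\ge 0$ (i.e. $\varepsilon\le 1/2$) for the inequality direction to be preserved.
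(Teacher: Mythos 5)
Your proposal is correct and follows essentially the same two-step route as the paper: first reduce the Gaussian chance constraint to the deterministic second-order cone inequality $(\boldsymbol{A}\bar{\boldsymbol{d}})^T\boldsymbol{p}+\rho_{1-\varepsilon}\norm{(\boldsymbol{AD})^T\boldsymbol{p}}_2\le b$ via the normal quantile, then identify that inequality with the worst-case constraint over the ball $\mathcal{Z}_{\text{E}}$ using the support-function (Cauchy--Schwarz) identity, exactly as the paper does with the convex conjugate of the indicator of $\mathcal{Z}_{\text{E}}$. Your added remarks (the explicit check that $\boldsymbol{a}=\boldsymbol{A}\boldsymbol{d}$, and that $\rho_{1-\varepsilon}\ge 0$, i.e.\ $\varepsilon\le 1/2$, is needed for the quantile and maximization steps) are minor refinements of, not departures from, the paper's argument.
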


\begin{proof}
\text{ }

\textbf{Step 1:} We first prove that $\mathbb{P}[\boldsymbol{a}^T \boldsymbol{p} \leq b] \geq 1 - \varepsilon$ is equivalent to $(\boldsymbol{A} \bar{\boldsymbol{d}})^T \boldsymbol{p} + \rho_{1-\varepsilon}  \norm{(\boldsymbol{AD})^T\boldsymbol{p}}_2 \leq b$.

Since $\boldsymbol{d}$ is normally distributed, we have $\boldsymbol{a} = \boldsymbol{A} \boldsymbol{d}$ is normally distributed with $\boldsymbol{a} \sim \mathcal{N}( \boldsymbol{A} \bar{\boldsymbol{d}}, \boldsymbol{A}\Sigma\boldsymbol{A}^T)$. Similarly,  $\boldsymbol{a}^T \boldsymbol{p} \in \mathbb{R}$ is also normally distributed with 
\begin{align}
\boldsymbol{a}^T \boldsymbol{p} \sim \mathcal{N}( (\boldsymbol{A} \bar{\boldsymbol{d}})^T \boldsymbol{p}, \boldsymbol{p}^T \boldsymbol{A} \boldsymbol{\Sigma }\boldsymbol{A}^T \boldsymbol{p})
\end{align}
If we want constraint \ref{cons_t3} to hold with probability at least $1 - \varepsilon$, it suffices to have:
\begin{align}
 (\boldsymbol{A} \bar{\boldsymbol{d}})^T \boldsymbol{p} + \rho_{1-\varepsilon} \sqrt{ \boldsymbol{p}^T \boldsymbol{A} \boldsymbol{\Sigma }\boldsymbol{A}^T \boldsymbol{p}} \leq b
 \label{cons_t4}
\end{align}
Substituting $\boldsymbol{\Sigma }= \boldsymbol{D}  \boldsymbol{D}^T$ into Eq. \ref{cons_t4} completes the proof of Step 1.

\textbf{Step 2:} We need to show that the robust counterpart of Eq. \ref{const_z} is $(\boldsymbol{A} \bar{\boldsymbol{d}})^T \boldsymbol{p} + \rho_{1-\varepsilon}  \norm{(\boldsymbol{AD})^T\boldsymbol{p}}_2 \leq b$.

Eq. \ref{const_z} is equivalent to:
\begin{align}
(\boldsymbol{A} \bar{\boldsymbol{d}})^T\boldsymbol{p} + \max_{\boldsymbol{z} \in \mathcal{Z}_{\text{E}}}(\boldsymbol{ADz})^T\boldsymbol{p} \leq b.
\label{eq_const_z_max}
\end{align}

Let $\delta(\boldsymbol{z}\mid\mathcal{Z}_{\text{E}})$ be the indicator function on set $\mathcal{Z}_{\text{E}}$:
\begin{align}
\delta(\boldsymbol{z}\mid\mathcal{Z}_{\text{E}})= 
\begin{cases}
    1,& \text{if } \boldsymbol{z} \in\mathcal{Z}_{\text{E}}\\
    0,              & \text{otherwise}
\end{cases}
\end{align}

Then the convex conjugate of $\delta(\boldsymbol{z} \mid \mathcal{Z}_{\text{E}})$ (also known as the \textbf{support function}) can be derived as \citep{Bertsimas2020}:
\begin{align}
  \delta^*(\boldsymbol{y}\mid\mathcal{Z}_{\text{E}}) = \sup_{\boldsymbol{z}\in \mathbb{R}^{HK}}\{\boldsymbol{y}^T \boldsymbol{z} - \delta(\boldsymbol{z}\mid\mathcal{Z}_{\text{E}})\}  = \sup_{ \boldsymbol{z}\in \mathcal{Z}_{\text{E}}} \boldsymbol{y}^T \boldsymbol{z} =  \rho_{1-\varepsilon} \norm{\boldsymbol{y}}_2
\end{align}
Therefore, Eq. \ref{eq_const_z_max} can be rewritten with the convex conjugate:
\begin{align}
    (\boldsymbol{A} \bar{\boldsymbol{d}})^T\boldsymbol{p} +  \delta^*( (\boldsymbol{AD})^T  \boldsymbol{p}\mid\mathcal{Z}) = (\boldsymbol{A} \bar{\boldsymbol{d}})^T\boldsymbol{p} + \rho_{1-\varepsilon}\norm{(\boldsymbol{AD})^T  \boldsymbol{p}}_2 \leq b
\end{align}
which finishes the proof of Step 2. Combining Steps 1 and 2 finishes the proof of the whole proposition.
\end{proof}

We observe that the ellipsoidal demand uncertainty performs like a regularization. It prevents $\boldsymbol{p}$ from being large in directions with considerable uncertainty in the demand. 

\begin{remark}\label{remark_1}
In the RO, the ellipsoidal uncertainty set can be used no matter what distribution $\boldsymbol{d}$ follows. If $\boldsymbol{d}$ is normally distributed, the parameter $\rho_{1 - \varepsilon}$ can be interpreted as the probability that constraint \ref{cons_t3} holds. The use of the multivariate normality assumption in Proposition \ref{prop_demand_uncertain} is for explaining the physical meaning of ellipsoidal uncertainty set and facilitating the choice of hyperparameters (i.e., $\rho_{1 - \varepsilon}$ and $\boldsymbol{D}$). Moreover, in the case study, we partially validate the multivariate normality assumption of $\boldsymbol{d}$ using smart card data. The Mardia’s Skewness Test \citep{cain2017univariate} shows that $\boldsymbol{d}$ has no significant skewness. 
\end{remark}

Eq. \ref{const_z} (i.e., the ellipsoidal uncertainty set) captures the correlation between demands at different time intervals and OD pairs. However, it does not impose any upper or lower bounds on $d_{hk}$. In reality, the demand level for a specific OD pair and time interval is usually bounded, which can be expressed as:
\begin{align}
d_{hk}^{\text{L}}  \leq d_{hk} \leq  d_{hk}^{\text{U}}
\label{eq_upper_lower_demand}
\end{align}
where $d_{hk}^{\text{L}}$ and $ d_{hk}^{\text{U}}$ are the corresponding lower and upper bounds for $d_{hk}$, respectively. Their values can be obtained from historical demand data. Eq. \ref{eq_upper_lower_demand} can be rewritten in a vector form as $\boldsymbol{d}^{\text{L}} \leq \boldsymbol{d} \leq \boldsymbol{d}^{\text{U}}$, where $\boldsymbol{d}^{\text{U}} = (d_{hk}^{\text{U}})_{h\in \mathcal{H},k\in \mathcal{K}}$ and $\boldsymbol{d}^{\text{L}} = (d_{hk}^{\text{L}})_{h\in \mathcal{H},k\in \mathcal{K}}$. Since we have $\boldsymbol{d} =\bar{\boldsymbol{d}} + \boldsymbol{Dz} $, a simple manipulation leads to
\begin{align}
\boldsymbol{d}^{\text{L}} - \bar{\boldsymbol{d}}  \leq  \boldsymbol{Dz} \leq  \boldsymbol{d}^{\text{U}} - \bar{\boldsymbol{d}}
\end{align}
We can rewrite it as a ``polyhedral uncertainty set'':  $\mathcal{Z}_{\text{P1}} = \left\{\boldsymbol{z} \in \mathbb{R}^{HK}: \boldsymbol{d}^{\text{L}} - \bar{\boldsymbol{d}}  \leq  \boldsymbol{Dz} \leq  \boldsymbol{d}^{\text{U}} - \bar{\boldsymbol{d}} \right\}$.

Eq. \ref{eq_upper_lower_demand} ensures the boundaries for each individual demand. Another similar constraint for the demand uncertainty is that: within a given time interval, the total demand across all OD pairs should also be bounded. This constraint can avoid some extreme scenarios that Eq. \ref{eq_upper_lower_demand} cannot capture (e.g., all $d_{hk}$ are at the lower or upper bounds). Mathematically:
\begin{align}
d_{h}^{\text{L}}  \leq \sum_{k\in\mathcal{K}}d_{hk} \leq  d_{h}^{\text{U}}
\label{eq_bound_total_demand}
\end{align}
where $d_{h}^{\text{L}}$ and $ d_{h}^{\text{U}}$ are the lower and upper bounds for the total demand in time interval $h$, which can be obtained from the historical demand.
Define $ \boldsymbol{S} \in  \mathbb{R}^{H \times HK}$, where the element $S_{h,h'k} = 1$ if $h = h'$, otherwise $S_{h,h'k} = 0$. Then Eq. \ref{eq_bound_total_demand} can be rewritten in a matrix form:
\begin{align}
\boldsymbol{d}_{\mathcal{H}}^{\text{L}} - \boldsymbol{S}\bar{\boldsymbol{d}} \leq  \boldsymbol{SDz} \leq \boldsymbol{d}_{\mathcal{H}}^{\text{U}} - \boldsymbol{S}\bar{\boldsymbol{d}} 
\label{eq_z_poly2}
\end{align}
where $\boldsymbol{d}_{\mathcal{H}}^{\text{U}} = (d_{h}^{\text{U}})_{h\in \mathcal{H}}$ and $\boldsymbol{d}_{\mathcal{H}}^{\text{L}} = (d_{h}^{\text{L}})_{h\in \mathcal{H}}$. And Eq. \ref{eq_z_poly2} can also be represented as a polyhedral uncertainty set:  $\mathcal{Z}_{\text{P2}} = \left\{\boldsymbol{z} \in \mathbb{R}^{HK}:\boldsymbol{d}_{\mathcal{H}}^{\text{L}} - \boldsymbol{S}\bar{\boldsymbol{d}} \leq  \boldsymbol{SDz} \leq \boldsymbol{d}_{\mathcal{H}}^{\text{U}} - \boldsymbol{S}\bar{\boldsymbol{d}} \right\}$.

As the RO aims to optimize under the ``worst case'' scenario and our objective function is the system travel time, intuitively, the worst-case scenario will be the largest demand in the uncertainty set. This may make the worst-case demand unrealistic since the extremely large demand seldom happens. What we expect in the RO is that the model can capture some critical OD pairs where the high demand in these OD pairs can make the system more congested (as opposed to high demand in all OD pairs). In order to let the RO capture critical OD pairs, we add an additional constraint on the total demand:
\begin{align}
\sum_{h \in\mathcal{H}, k\in\mathcal{K}}d_{hk} \leq  \Gamma \cdot \sum_{h \in\mathcal{H}, k\in\mathcal{K}}\bar{d}_{hk} 
\label{eq_z_poly3}
\end{align}
where $\Gamma > 0$ is a predetermined constant. $\Gamma = 1$ means we assume the total demand in the worst-case scenario is the same as the nominal one, but the spatial and temporal distributions are different. The worst-case scenario will have more demand on critical OD pairs but less demand on others. The value of $\Gamma$ can be determined based on the highest total demand observed over a time period.  

Similarly, Eq. \ref{eq_z_poly3} can be written in a matrix form:
\begin{align}
\boldsymbol{1}^T (\bar{\boldsymbol{d}} + \boldsymbol{Dz})\leq  \Gamma \cdot \boldsymbol{1}^T \bar{\boldsymbol{d}}
\label{eq_z_poly3_matrix}
\end{align}
where $\boldsymbol{1} \in \mathbb{R}^{HK}$ is a vector with all elements one. And we define another polyhedral uncertainty set:  $\mathcal{Z}_{\text{P3}} = \left\{\boldsymbol{z} \in \mathbb{R}^{HK}:\boldsymbol{1}^T (\bar{\boldsymbol{d}} + \boldsymbol{Dz})\leq  \Gamma \cdot \boldsymbol{1}^T \bar{\boldsymbol{d}} \right\}$.

Therefore, the final robust constraint for Eq. \ref{cons_t3} is
\begin{align}
     (\boldsymbol{A} \bar{\boldsymbol{d}} + \boldsymbol{ADz})^T\boldsymbol{p} \leq b, \quad \forall \boldsymbol{z} \in \mathcal{Z}_{\text{E}}\cap\mathcal{Z}_{\text{P}}\cap\mathcal{Z}_{\text{P2}}\cap\mathcal{Z}_{\text{P3}}
     \label{const_z_all}
\end{align}

To derive the robust counterpart of the constraint, we first introduce the following lemma. 

\begin{lemma}
\label{lemma_RC}
    \emph{For a constraint $\Bar{\boldsymbol{a}}^T \boldsymbol{x} + \delta^*(\boldsymbol{P}^T\boldsymbol{x} \mid \mathcal{Z}) \leq b$,
    let $\mathcal{Z}_1,...,\mathcal{Z}_k$ be closed convex sets, such that $\bigcap_i ri(\mathcal{Z}_i) \neq \emptyset$\footnote{$ri(\mathcal{Z}_i)$ indicates the relative interior of the set $\mathcal{Z}_i$.}, and let $\mathcal{Z} = \cap_{i=1}^k \mathcal{Z}_i$. Then,
    $$\delta^*(\boldsymbol{y}\mid \mathcal{Z}) =\min_{\boldsymbol{y}_1,...,\boldsymbol{y}_k} \{ \sum_{i=1}^k \delta^*(\boldsymbol{y}_i \mid \mathcal{Z}_i) \mid \sum_{i=1}^k \boldsymbol{y}_i = \boldsymbol{y} \}, $$
    and the constraint becomes
    $$\begin{cases} 
      \Bar{\boldsymbol{a}}^T \boldsymbol{x} + \sum_{i=1}^k \delta^*(\boldsymbol{y}_i \mid \mathcal{Z}_i) \leq b \\
      \sum_{i=1}^k \boldsymbol{y}_i = \boldsymbol{P}^T \boldsymbol{x}  
   \end{cases}$$
   where $\delta^*( \cdot \mid \cdot)$ is the support function (i.e., convex conjugate of the indicator function). 
   }
\end{lemma}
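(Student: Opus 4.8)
The plan is to recognize Lemma~\ref{lemma_RC} as the classical convex-analytic fact that the support function of an intersection of closed convex sets is the infimal convolution of the individual support functions, rendered \emph{exact} by the relative-interior (Slater-type) qualification $\bigcap_i ri(\mathcal{Z}_i) \neq \emptyset$. Everything then reduces to conjugate duality applied to indicator functions. The first thing I would do is adopt the standard convex-analysis convention in which the indicator $\delta(\cdot \mid S)$ equals $0$ on $S$ and $+\infty$ off $S$, so that its convex conjugate is exactly the support function $\delta^*(\cdot \mid S)$ already used in the excerpt. Under this convention the indicator of the intersection decomposes additively,
\[
\delta(\boldsymbol{z} \mid \mathcal{Z}) = \delta\!\left(\boldsymbol{z} \,\Big|\, \textstyle\bigcap_{i=1}^k \mathcal{Z}_i\right) = \sum_{i=1}^k \delta(\boldsymbol{z} \mid \mathcal{Z}_i),
\]
since $\boldsymbol{z}$ lies in $\mathcal{Z}$ precisely when it lies in every $\mathcal{Z}_i$, and otherwise at least one summand on the right is $+\infty$.

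Next I would take convex conjugates of both sides and invoke the conjugate-of-a-sum theorem: for closed proper convex functions $f_i$ whose effective domains satisfy $\bigcap_i ri(\operatorname{dom} f_i) \neq \emptyset$, the conjugate of $\sum_i f_i$ equals the infimal convolution of the conjugates $f_i^*$, and the defining infimum is \emph{attained}. Applied to $f_i = \delta(\cdot \mid \mathcal{Z}_i)$, whose effective domains are exactly the sets $\mathcal{Z}_i$, the qualification becomes precisely the hypothesis $\bigcap_i ri(\mathcal{Z}_i) \neq \emptyset$, and it yields
\[
\delta^*(\boldsymbol{y} \mid \mathcal{Z}) = \min_{\boldsymbol{y}_1,\dots,\boldsymbol{y}_k}\Big\{\textstyle\sum_{i=1}^k \delta^*(\boldsymbol{y}_i \mid \mathcal{Z}_i) \;\Big|\; \sum_{i=1}^k \boldsymbol{y}_i = \boldsymbol{y}\Big\},
\]
which is the first claimed identity. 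The relative-interior condition is doing the essential work here: it guarantees both that $\mathcal{Z} = \bigcap_i \mathcal{Z}_i$ is nonempty (so $\delta^*(\cdot \mid \mathcal{Z})$ is proper) and that the infimal convolution is exact, so the $\inf$ may be written as an attained $\min$.

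Finally I would substitute this identity into the robust constraint $\bar{\boldsymbol{a}}^T\boldsymbol{x} + \delta^*(\boldsymbol{P}^T\boldsymbol{x}\mid\mathcal{Z}) \leq b$ with $\boldsymbol{y} = \boldsymbol{P}^T\boldsymbol{x}$. Because $\delta^*(\boldsymbol{P}^T\boldsymbol{x}\mid\mathcal{Z})$ is itself the minimum over the splitting variables $\boldsymbol{y}_1,\dots,\boldsymbol{y}_k$, the single inequality is satisfiable if and only if \emph{some} feasible splitting meets the bound, i.e. the constraint is equivalent to the system
\[
\bar{\boldsymbol{a}}^T\boldsymbol{x} + \sum_{i=1}^k \delta^*(\boldsymbol{y}_i \mid \mathcal{Z}_i) \leq b, \qquad \sum_{i=1}^k \boldsymbol{y}_i = \boldsymbol{P}^T\boldsymbol{x},
\]
with $\boldsymbol{y}_1,\dots,\boldsymbol{y}_k$ promoted to auxiliary decision variables; this is exactly the reformulation asserted by the lemma. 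I expect the only genuine obstacle to be the justification of the \emph{exact} infimal-convolution duality (the attainment of the infimum) under the relative-interior qualification, which is the one nontrivial ingredient; once that standard theorem is invoked, the additive decomposition of the indicator and the conversion of the minimization inside the constraint into an existential system are both routine.
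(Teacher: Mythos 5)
Your proposal is correct, and it is essentially the argument behind the paper's treatment: the paper itself omits a proof and defers to \citet{ben2015deriving}, whose derivation rests on exactly the two ingredients you identify, namely the additive decomposition of the indicator of an intersection and the conjugate-of-a-sum theorem (conjugate equals infimal convolution of conjugates, with the infimum attained under the qualification $\bigcap_i ri(\mathcal{Z}_i) \neq \emptyset$). Your final step, converting the attained minimum inside the constraint into an existential system over the auxiliary variables $\boldsymbol{y}_1,\dots,\boldsymbol{y}_k$, is also the same reformulation used there, so there is no gap.
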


The proof of Lemma \ref{lemma_RC} can be found in \citet{ben2015deriving}. From Proposition \ref{prop_demand_uncertain}, we have $\delta^*(\boldsymbol{y}\mid \mathcal{Z}_{\text{E}}) =  \rho_{1-\varepsilon} \norm{\boldsymbol{y}}_2$. For the polyhedral uncertainty set, consider a general form $\mathcal{Z}_{\text{P}} = \left\{\boldsymbol{z} : \boldsymbol{Hz} \leq \boldsymbol{c} \right\}$. And the support function for $\mathcal{Z}_{\text{P}}$ is
\begin{align}
      \delta^*(\boldsymbol{y}\mid\mathcal{Z}_{\text{P}}) = \max_{\boldsymbol{z}}\{\boldsymbol{y}^T \boldsymbol{z} \mid  \boldsymbol{Hz} \leq \boldsymbol{c}\}  = \min_{\boldsymbol{u}}\{\boldsymbol{c}^T \boldsymbol{u} \mid  \boldsymbol{H}^T \boldsymbol{u} = \boldsymbol{y},  \boldsymbol{u} \geq 0\}
      \label{eq_sup_poly}
\end{align}
where the second equality follows from linear programming duality. Eq. \ref{eq_sup_poly} can be used to derive the support function for $\mathcal{Z}_{\text{P1}}$, $\mathcal{Z}_{\text{P2}}$, and $\mathcal{Z}_{\text{P3}}$. For example, consider the robust counterpart for Eq. \ref{eq_z_poly3}, we have
\begin{align}
    \delta^*(\boldsymbol{y}_6\mid\mathcal{Z}_{\text{P3}}) = \min_{{u}_3}\{(\Gamma -1)\cdot(\boldsymbol{1}^T \bar{\boldsymbol{d}} )\cdot {u}_3 \mid  (\boldsymbol{1}^T\boldsymbol{D})^T {u}_3 = \boldsymbol{y}_6,  {u}_3 \geq 0\}
\end{align}
where $\boldsymbol{y}_6 \in \mathbb{R}^{HK}$ and ${u}_3 \in \mathbb{R}$ are decision variables in the RO model. Note that the subscripts for $\boldsymbol{y}$ and $u$ (i.e., 6 and 3) are used for the consistency in Eq. \ref{eq_all_rc}. 

Based on Lemma \ref{lemma_RC}, the robust counterpart for Eq. \ref{const_z_all} is
\begin{subequations}
\label{eq_all_rc}
\begin{align}
& (\boldsymbol{A} \bar{\boldsymbol{d}})^T\boldsymbol{p} +  \rho_{1-\varepsilon}\norm{\boldsymbol{y}_1}_2  +  (\boldsymbol{d}^{\text{U}} - \bar{\boldsymbol{d}})^T \boldsymbol{u}_1  +  (\bar{\boldsymbol{d}} - \boldsymbol{d}^{\text{L}})^T \boldsymbol{u}_2 + (\boldsymbol{d}^{\text{U}}_{\mathcal{H}} - \boldsymbol{S}\bar{\boldsymbol{d}})^T \boldsymbol{v}_1  +  (\boldsymbol{S}\bar{\boldsymbol{d}}-  \boldsymbol{d}^{\text{L}}_{\mathcal{H}})^T \boldsymbol{v}_2 \notag \\
& + (\Gamma -1)\cdot(\boldsymbol{1}^T \bar{\boldsymbol{d}} )\cdot {u}_3 \leq b \\
& \boldsymbol{D}^T\boldsymbol{u}_1 = \boldsymbol{y}_2\label{eq_RO1_const2}\\
& -\boldsymbol{D}^T\boldsymbol{u}_2 = \boldsymbol{y}_3\\
& (\boldsymbol{SD})^T\boldsymbol{v}_1 = \boldsymbol{y}_4\\
& -(\boldsymbol{SD})^T\boldsymbol{v}_2 = \boldsymbol{y}_5\\
& (\boldsymbol{1}^T\boldsymbol{D})^T {u}_3 = \boldsymbol{y}_6\\
& \sum_{i=1}^6 \boldsymbol{y}_i = (\boldsymbol{AD})^T \boldsymbol{p} \\
& \boldsymbol{u}_1,\boldsymbol{u}_2, \boldsymbol{v}_1,\boldsymbol{v}_2,{u}_3 \geq 0\label{eq_RO1_const3}
\end{align}
\end{subequations}

Hence, the RO problem can be formulated as
\begin{subequations}
\begin{align}\label{eq_all_opt}
 \quad \min_{\boldsymbol{p},\boldsymbol{u},\boldsymbol{v},\boldsymbol{y},t} \quad & t \\
    \text{s.t.} \quad &
    \sum_{(h,k,r) \in \mathcal{F}}\beta_{hkr} \cdot d_{hk} \cdot  p_{hkr} + \rho_{1-\varepsilon}\norm{\boldsymbol{y}_1}_2  +  (\boldsymbol{d}^{\text{U}} - \bar{\boldsymbol{d}})^T \boldsymbol{u}_1  +  (\bar{\boldsymbol{d}} - \boldsymbol{d}^{\text{L}})^T \boldsymbol{u}_2 + (\boldsymbol{d}^{\text{U}}_{\mathcal{H}}  - \boldsymbol{S}\bar{\boldsymbol{d}})^T \boldsymbol{v}_1   \notag \\ 
    &  
  +  (\boldsymbol{S}\bar{\boldsymbol{d}}-  \boldsymbol{d}^{\text{L}}_{\mathcal{H}})^T \boldsymbol{v}_2 + (\Gamma -1)\cdot(\boldsymbol{1}^T \bar{\boldsymbol{d}} )\cdot {u}_3+Z(\Tilde{\boldsymbol{f}}) -  \sum_{(h,k,r) \in \mathcal{F}}\beta_{hkr}\tilde{f}_{hkr} \leq t \label{eq_RO1_const1}\\
    & \text{Constraints } (\ref{eq_RO1_const2}) - (\ref{eq_RO1_const3}) \\
    & \text{Constraints } (\ref{eq_const_naive1}) \text{ and } (\ref{eq_const_naive2})
\end{align}
\label{eq_RO1}
\end{subequations}
By eliminating $t$ and inserting constraint \ref{eq_RO1_const1} in the objective function it becomes
\begin{align}
\hat{Z}(\boldsymbol{p},\boldsymbol{u},\boldsymbol{v},\boldsymbol{y})^{\text{RC}} &=  \sum_{(h,k,r) \in \mathcal{F}}\beta_{hkr} \cdot (d_{hk} \cdot  p_{hkr} -\tilde{f}_{hkr} ) + \rho_{1-\varepsilon}\norm{\boldsymbol{y}_1}_2  +  (\boldsymbol{d}^{\text{U}} - \bar{\boldsymbol{d}})^T \boldsymbol{u}_1  +  (\bar{\boldsymbol{d}} - \boldsymbol{d}^{\text{L}})^T \boldsymbol{u}_2 \notag \\ 
    & + (\boldsymbol{d}^{\text{U}}_{\mathcal{H}} 
   - \boldsymbol{S}\bar{\boldsymbol{d}})^T \boldsymbol{v}_1  
  +  (\boldsymbol{S}\bar{\boldsymbol{d}}-  \boldsymbol{d}^{\text{L}}_{\mathcal{H}})^T \boldsymbol{v}_2 + (\Gamma -1)\cdot(\boldsymbol{1}^T \bar{\boldsymbol{d}} )\cdot {u}_3 +Z(\Tilde{\boldsymbol{f}}) 
    \label{eq_obj_RC}
\end{align}
which yields a second-order cone programming (SOCP).

\subsection{Solution procedure}
After incorporating the demand uncertainty, the final robust counterpart (RC) of the optimal flow problem can be formulated as:
\begin{subequations}
\begin{align}
    [RC (\Tilde{\boldsymbol{f}})] \quad \min_{\boldsymbol{p},\boldsymbol{u},\boldsymbol{v},\boldsymbol{y}} \quad & \hat{Z}(\boldsymbol{p},\boldsymbol{u},\boldsymbol{v},\boldsymbol{y})^{\text{RC}} =  \sum_{(h,k,r) \in \mathcal{F}}\beta_{hkr}(\Tilde{\boldsymbol{f}}) \cdot (d_{hk} \cdot  p_{hkr} -\tilde{f}_{hkr} ) + \rho_{1-\varepsilon}\norm{\boldsymbol{y}_1}_2  +  (\boldsymbol{d}^{\text{U}} - \bar{\boldsymbol{d}})^T \boldsymbol{u}_1   \notag \\ 
    &
      +  (\bar{\boldsymbol{d}} - \boldsymbol{d}^{\text{L}})^T \boldsymbol{u}_2
   + (\boldsymbol{d}^{\text{U}}_{\mathcal{H}} 
   - \boldsymbol{S}\bar{\boldsymbol{d}})^T \boldsymbol{v}_1  
  +  (\boldsymbol{S}\bar{\boldsymbol{d}}-  \boldsymbol{d}^{\text{L}}_{\mathcal{H}})^T \boldsymbol{v}_2 + (\Gamma -1)\cdot(\boldsymbol{1}^T \bar{\boldsymbol{d}} )\cdot {u}_3
  +  Z(\Tilde{\boldsymbol{f}})  \\
    \text{s.t.} \quad
    & \text{Constraints } (\ref{eq_RO1_const2}) - (\ref{eq_RO1_const3}) \\
    & \sum_{r\in R_k} p_{hkr} = 1  \quad \forall h \in \mathcal{H}, k \in \mathcal{K} \\
    & 0 \leq p_{hkr} \leq 1 \quad \forall (h,k,r) \in \mathcal{F}
\end{align}
\label{eq_ro_all}
\end{subequations}
This SOCP can be efficiently solved by inner interior point methods that are embedded in many existing solvers.

However, due to the first-order approximation of $Z(\boldsymbol{f})$, $\beta_{hkr}(\Tilde{\boldsymbol{f}})$ needs to be updated once a new flow pattern is obtained. Hence, after obtaining $\boldsymbol{p}^*$ from the RC problem, the simulation should be run again to update $\beta_{hkr}(\Tilde{\boldsymbol{f}})$.  Before that, the corresponding worst-case demand (WD), which will be used as the new $\Tilde{\boldsymbol{f}}$, is needed. It can be obtained by solving the worst case $\boldsymbol{z} \in \mathcal{Z}_{\text{E}}\cap\mathcal{Z}_{\text{P1}}\cap\mathcal{Z}_{\text{P2}}\cap\mathcal{Z}_{\text{P3}}$:
\begin{subequations}
\begin{align}
    [WD(\boldsymbol{p}^*)] \quad \max_{\boldsymbol{z}} \quad & (\boldsymbol{ADz})^T\boldsymbol{p}^*  \\
    \text{s.t.} \quad
    & \norm{\boldsymbol{z}}_2 \leq \rho_{1-\varepsilon}\\
    & \boldsymbol{d}^{\text{L}} - \bar{\boldsymbol{d}}  \leq  \boldsymbol{Dz} \leq  \boldsymbol{d}^{\text{U}} - \bar{\boldsymbol{d}}\\
    & \boldsymbol{d}_{\mathcal{H}}^{\text{L}} - \boldsymbol{S}\bar{\boldsymbol{d}} \leq  \boldsymbol{SDz} \leq \boldsymbol{d}_{\mathcal{H}}^{\text{U}} - \boldsymbol{S}\bar{\boldsymbol{d}}\\
    & \boldsymbol{1}^T (\bar{\boldsymbol{d}} + \boldsymbol{Dz})\leq  \Gamma \cdot \boldsymbol{1}^T \bar{\boldsymbol{d}}
\end{align}
\label{eq_WD}
\end{subequations}
If the solution for Eq. \ref{eq_WD} is $\boldsymbol{z}^*$, the worse case demand is $\boldsymbol{d}^* = \bar{\boldsymbol{d}} + \boldsymbol{Dz}^*$. Next, we can update $\boldsymbol{\beta}(\Tilde{\boldsymbol{f}})$ and $ Z(\Tilde{\boldsymbol{f}})$ as
\begin{align}
 Z(\Tilde{\boldsymbol{f}}), \boldsymbol{\beta}(\Tilde{\boldsymbol{f}}) = \textsc{Sim-FOA}(\boldsymbol{d}^*, \boldsymbol{p}^*)
 \label{eq_sim_FOA}
\end{align}
where $\Tilde{\boldsymbol{f}}$ in Eq. \ref{eq_sim_FOA} indicates $\tilde{f}_{hkr}= d_{hk}^* \cdot p_{hkr}^*$. And $\textsc{Sim-FOA}(\cdot)$ is a pseudo function of simulation plus first-order approximation as described in Section \ref{sub_foa}. 

The RC, WD, and $\textsc{Sim-FOA}(\cdot)$ problems need to be solved iteratively. This can be treated as a fixed-point problem. A conventional way to solve a fixed-point problem is the method of successive averages (MSA). In the typical system optimal \textbf{traffic} assignment problem, the optimal flow pattern is reached when for every OD pair, the marginal costs of all paths for this OD pair are the same. This implies that, ideally, when the flow distribution is optimal, we should have $\beta_{hkr}(\Tilde{\boldsymbol{f}}) = \beta_{hkr'}(\Tilde{\boldsymbol{f}})$ for all $r, r' \in R_k  \setminus  R_k^{\text{NoFlow}}$, where $R_k^{\text{NoFlow}} = \{{r\in R_k \mid f_{hkr} = 0}\}$ is the path set with zero flows. This implies that at the system optimal assignment, the marginal cost (travel time) of every non-zero flow path is the same (i.e., one cannot decrease the system travel time by switching passengers from one path to another). 

However, in our study, this cannot be set as the convergence criterion because, in the dynamic \textbf{transit} assignment context, the cost function is not continuous due to left behind. Adding one more passenger to a path may lead to the system travel time increased by one or more headways. The following example illustrates that $\beta_{hkr}(\Tilde{\boldsymbol{f}})$ can be arbitrarily large, which may cause the criterion of $\beta_{hkr}(\Tilde{\boldsymbol{f}}) = \beta_{hkr'}(\Tilde{\boldsymbol{f}})$ never being satisfied.

\begin{example}\label{example_large_beta}
\emph{Consider a single direction bus line with $N$ stations (Figure \ref{fig_example_large}) and a fixed headway $W$. Assume every bus has a capacity of 1. There is one passenger waiting at each station except for the first station (i.e., there are $N-1$ waiting passengers). Now assume that one more passenger is added to station 1. Since the capacity of buses is 1, the newly added passenger will force all waiting passengers to be left behind one more time. Hence, the total added system travel time is $ (N-1) \times W$. In this scenario, the $\beta_{hkr}(\Tilde{\boldsymbol{f}})$ associated with the added passenger can be arbitrarily large depending on the number of stations $N$. 
}
\begin{figure}[H]
\centering
\includegraphics[width = 0.7 \linewidth]{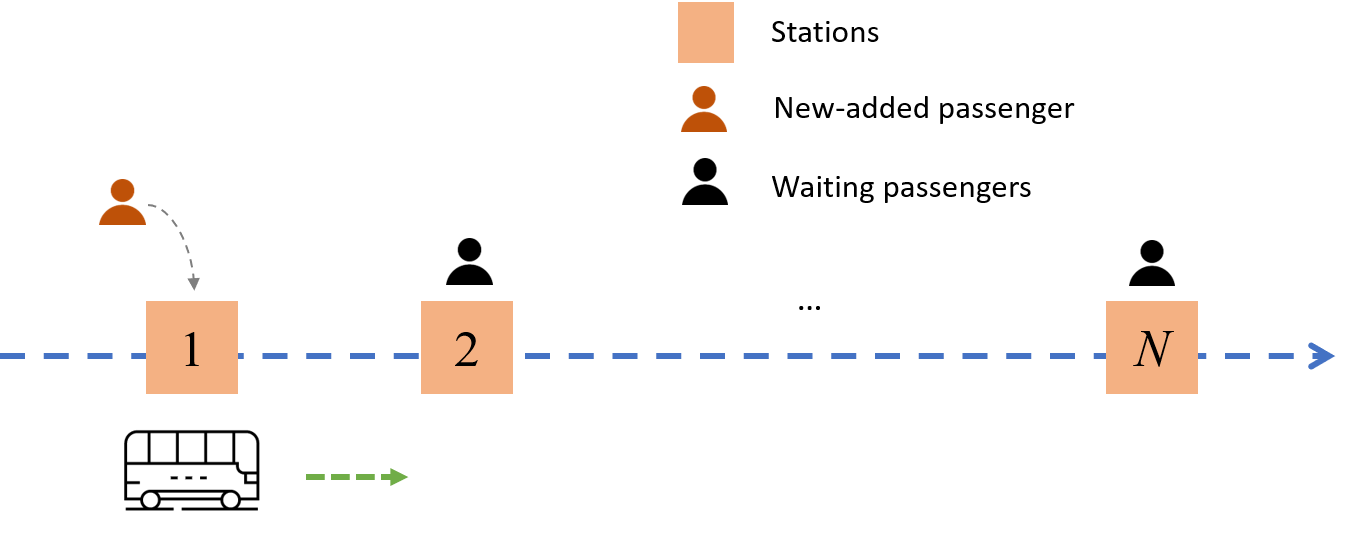}
\caption{Example for arbitrarily large $\beta_{hkr}(\Tilde{\boldsymbol{f}})$}
\label{fig_example_large}
\end{figure}
\end{example}

Therefore, in this study, we define the convergence criteria based on the value of system travel time (i.e., when the value of the system travel time is relatively stable within a range). Specifically, it is assumed that the MSA algorithm has converged if
\begin{align}
  \abs*{Z(\Tilde{\boldsymbol{f}})^{(n)} - \frac{1}{N^{\text{Cvg}}}\sum_{n' = n - N^{\text{Cvg}}}^{n-1} Z(\Tilde{\boldsymbol{f}})^{(n')} } \leq \epsilon
 \label{eq_convergence}
\end{align}
where $Z(\Tilde{\boldsymbol{f}})^{(n)}$ is the system travel time at the $n$-th iteration and $\epsilon$ is a predetermined threshold. Eq. \ref{eq_convergence} means that when the current system travel time is close to its average value of the last $N^{\text{Cvg}}$ iterations, the algorithm terminates. Taking the average of the last $N^{\text{Cvg}}$ iterations can mitigate the impact of fluctuations caused by the discontinuity of the system travel time.  
 

The whole solution algorithm is described in Algorithm \ref{alg_overall}.  Line 6 indicates the MSA step. Lines 10 and 11 mean that we will use the path shares with the smallest system travel time over the last $N^{\text{Cvg}} + 1$ iterations. 

\begin{algorithm} 
\caption{Solution procedure of the robust path recommendation} \label{alg_overall}
\begin{algorithmic}[1]
\State Initialize $\boldsymbol{p}^{(0)}$ (e.g., uniform path shares), $\boldsymbol{d}^{(0)}$ (e.g., nominal demand) and specify $N^{\text{Cvg}}, \epsilon$. 
\State Set iteration counter $n=0$.
\Do
    \State  $Z(\Tilde{\boldsymbol{f}})^{(n)}, \boldsymbol{\beta}(\Tilde{\boldsymbol{f}})^{(n)} = \textsc{Sim-FOA}(\boldsymbol{d}^{(n)}, \boldsymbol{p}^{(n)})$
    \State Solve the RC problem (Eq. \ref{eq_ro_all}) with $Z(\Tilde{\boldsymbol{f}})^{(n)}$ and $ \boldsymbol{\beta}(\Tilde{\boldsymbol{f}})^{(n)}$ as inputs, and return $\hat{\boldsymbol{p}}^{(n+1)}$
    \State ${\boldsymbol{p}}^{(n+1)} = \frac{1}{n+1}\hat{\boldsymbol{p}}^{(n+1)} + (1-\frac{1}{n+1}){\boldsymbol{p}}^{(n)} $
    \State Solve the WD problem (Eq. \ref{eq_WD}) with ${\boldsymbol{p}}^{(n+1)}$ as input and return $\boldsymbol{d}^{(n+1)}$
     \State $n = n + 1$
\doWhile{$n \leq N^{\text{Cvg}}$ or $\abs*{Z(\Tilde{\boldsymbol{f}})^{(n)} - \frac{1}{N^{\text{Cvg}}}\sum_{n' = n - N^{\text{Cvg}}}^{n-1} Z(\Tilde{\boldsymbol{f}})^{(n')} } > \epsilon$}
\State $n^* = \argmin_{n' = n-N^{\text{Cvg}},...,n} Z(\Tilde{\boldsymbol{f}})^{(n')}$
\State \Return $\boldsymbol{p}^{(n^*)}$
\end{algorithmic}
\end{algorithm}

Let $\boldsymbol{p}^*$ be the optimal path shares by from Algorithm \ref{alg_overall}. To realize the optimal path shares in the real world, the following system design can be used:
\begin{itemize} 
    \item Transit operators deploy the recommendation system to smartphone apps, websites, and electrical screens at stations.
    \item Passengers, when using the system, input their origins, destinations, and departure times.
    \item For a passenger input OD pair $k$ and departure time $h$, the system will return a single recommended path $r$ to them with probability $p_{hkr}^*$.
\end{itemize}
In this way, we expect the final path flows are close to the system optimal path flows if passengers follow the recommendation. In reality, passengers may have different preferences for different recommendations. That is, they may not follow the recommendations if they are provided with an unpreferred path. \ref{sec_path_pass} discusses how to solve another path-passenger matching problem that incorporates passengers' preferences.

\section{Model extensions}\label{sec_model_exten}
\subsection{Solving the model in a rolling horizon}\label{sec_model_rolling}
The model discussed in the previous section is a one-shot solution for path recommendation, which means the model will be run at the beginning of an incident ($h_0$) and output the recommendations for the whole period of interest $[h_0, h_H]$. In application, the model would be implemented in a rolling horizon framework. 

Specifically, at time interval $\tilde{h}$, we first update the demand and supply information, including new demand estimates, new demand uncertainty sets, new available path sets, new service routes and frequencies, new incident duration estimates, etc. Based on the formulation above (i.e., let $h_0 = \tilde{h}$)), we solve the model to obtain recommendations for time $[\tilde{h}, h_H]$. But we only implement the recommendation strategies for the current time $\tilde{h}$ (i.e., $p_{\tilde{h}kr}^*$). An illustration of the rolling horizon implementation is shown in Figure \ref{fig_rolling}. In this way, the new information obtained with the evolution of the incident and system operations can be used to improve model performance (this is known as adaptive RO). 

\begin{figure}[H]
\centering
\includegraphics[width = 0.7 \linewidth]{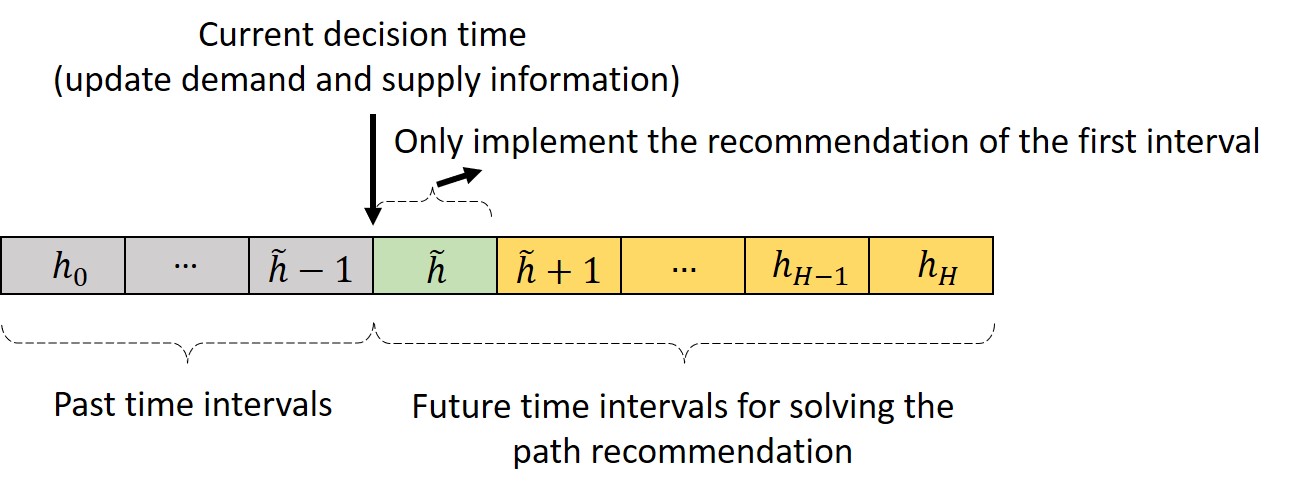}
\caption{Illustration of the rolling horizon implementation}
\label{fig_rolling}
\end{figure}

In the case study, we did not implement the rolling horizon for the following reasons. 1) First, as we use the CTA system as the case study, the operators in the selected incident did not update operation changes once the decisions have been made. Hence, it becomes less meaningful for the rolling horizon with fixed operations. 2) Second, there are computational time challenges in the case study. The main bottleneck comes from the simulation model. Running a simulation in the large-scale CTA network takes around 0.6 minutes. And our algorithm requires around 35 iterations to converge (see Section \ref{sec_model_converge}). 3) Third, as mentioned before, a more holistic implementation of the rolling horizon includes the update of the ``uncertainty set'', which implies an adaptive robust optimization. This requires additional derivations on how to use previous demand realizations to generate the new demand uncertainty set, which deserves separate future research.

\subsection{Incident duration uncertainty}\label{sec_dur_uncert}
In this study, we assume operators have a reasonable estimate of incident duration. However, it is possible that we can only obtain a distribution of incident duration. In this section, we show that our formulation can be easily extended to capture the incident duration uncertainty with stochastic optimization (SO) techniques\footnote{The reason for using SO, instead of RO, to capture incident duration uncertainty is that the worst-case scenario for the incident duration is always the largest one, which makes the problem trivial and may not reflect reality.}.  

Let the set of all possible incident scenarios be $\Omega$. For example, $\Omega$ may include incidents with a duration of 30, 40, or 50 minutes. For each scenario $\xi \in \Omega$, we denote $\beta_{hkr}(\Tilde{\boldsymbol{f}}; \xi)$ and $Z(\Tilde{\boldsymbol{f}}; \xi)$ as the approximated gradient and current system travel time under flow $\Tilde{\boldsymbol{f}}$ and incident scenario $\xi$. Hence, the objective function for the RO problem becomes:
\begin{align}
    \mathbb{E}[ \hat{Z}(\boldsymbol{p},\boldsymbol{u},\boldsymbol{v},\boldsymbol{y})^{\text{RC}}] & = \sum_{\xi \in \Omega} \mathbb{P}(\xi) \left[Z(\Tilde{\boldsymbol{f}};\xi) + \sum_{(h,k,r)\in\mathcal{F}}\beta_{hkr}(\Tilde{\boldsymbol{f}};\xi) \cdot (d_{hk}\cdot p_{hkr} - \tilde{f}_{hkr})\right]  + \rho_{1-\varepsilon}\norm{\boldsymbol{y}_1}_2  \notag \\ 
    & +  (\boldsymbol{d}^{\text{U}} - \bar{\boldsymbol{d}})^T \boldsymbol{u}_1  +  (\bar{\boldsymbol{d}} - \boldsymbol{d}^{\text{L}})^T \boldsymbol{u}_2 + (\boldsymbol{d}^{\text{U}}_{\mathcal{H}} 
   - \boldsymbol{S}\bar{\boldsymbol{d}})^T \boldsymbol{v}_1  
  +  (\boldsymbol{S}\bar{\boldsymbol{d}}-  \boldsymbol{d}^{\text{L}}_{\mathcal{H}})^T \boldsymbol{v}_2 + (\Gamma -1)\cdot(\boldsymbol{1}^T \bar{\boldsymbol{d}} )\cdot {u}_3
\end{align}
where $\mathbb{P}(\xi)$ is the probability of scenario $\xi$ being realized. The expectation above is taking over different incident scenarios. Define $Z(\Tilde{\boldsymbol{f}}; \Omega):= \sum_{\xi \in \Omega} \mathbb{P}(\xi) Z(\Tilde{\boldsymbol{f}};\xi)$ and $\beta_{hkr}(\Tilde{\boldsymbol{f}}; \Omega) := \sum_{\xi \in \Omega} \mathbb{P}(\xi)\beta_{hkr}(\Tilde{\boldsymbol{f}};\xi)$, substituting them into the objective function
\begin{align}
    \mathbb{E}[ \hat{Z}(\boldsymbol{p},\boldsymbol{u},\boldsymbol{v},\boldsymbol{y})^{\text{RC}}] =  \sum_{(h,k,r) \in \mathcal{F}}\beta_{hkr}(\Tilde{\boldsymbol{f}};\Omega) \cdot (d_{hk} \cdot  p_{hkr} -\tilde{f}_{hkr} ) + \rho_{1-\varepsilon}\norm{\boldsymbol{y}_1}_2  +  (\boldsymbol{d}^{\text{U}} - \bar{\boldsymbol{d}})^T \boldsymbol{u}_1  \notag \\ 
      +  (\bar{\boldsymbol{d}} - \boldsymbol{d}^{\text{L}})^T \boldsymbol{u}_2
  + (\boldsymbol{d}^{\text{U}}_{\mathcal{H}} 
  - \boldsymbol{S}\bar{\boldsymbol{d}})^T \boldsymbol{v}_1  
  +  (\boldsymbol{S}\bar{\boldsymbol{d}}-  \boldsymbol{d}^{\text{L}}_{\mathcal{H}})^T \boldsymbol{v}_2 +  (\Gamma -1)\cdot(\boldsymbol{1}^T \bar{\boldsymbol{d}} )\cdot {u}_3 + Z(\Tilde{\boldsymbol{f}};\Omega)  
\end{align}
As the constraints in the RO problem are not related to incident scenarios (i.e., $\beta_{hkr}(\Tilde{\boldsymbol{f}})$ and $Z(\Tilde{\boldsymbol{f}})$ are not included in the constraint part), this implies that incorporating the incident duration uncertainty with SO only requires a change in the objective function.

\section{Case study design}\label{sec_case_study}
In the case study, we consider an actual incident in the Blue line of the Chicago Transit Authority (CTA) urban rail system (Figure \ref{fig_incident}). The incident starts at 8:14 AM and ends at 9:13 AM on Feb 1st, 2019 due to infrastructure issues between Harlem and Jefferson Park stations. The entire Blue Line was suspended. During the disruption, the Loop (Chicago CBD area) is the destination for most passengers. Usually, there are four paths leading to the Loop: 1) using Blue Line (i.e., waiting for the system to recover), 2) using the parallel bus lines, 3) using the North-South (NS) bus lines to transfer to the Green Line, and 4) using the West-East (WE) bus lines to transfer to the Brown Line. Based on the service structure, we can construct the route sets $R_k$ for each OD pair $k$.

\begin{figure}[htb]
\centering
\includegraphics[width = 0.8 \linewidth]{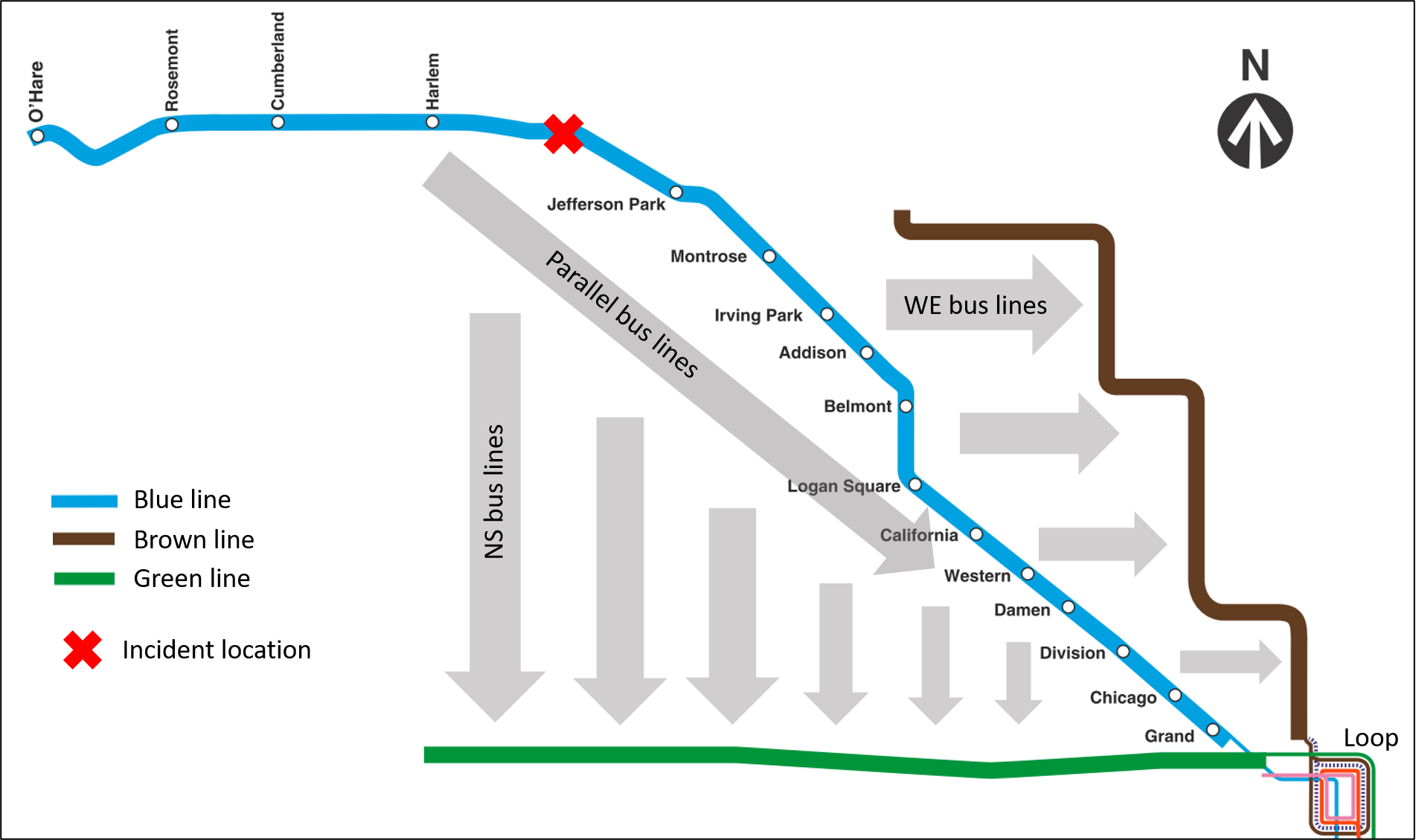}
\caption{Case study diagram}
\label{fig_incident}
\end{figure}

\subsection{Parameter setting}
$\mathcal{K}$ is the set of all OD pairs with origins at the Blue Line and destinations at the Loop. The response time is set as $\eta = 0$ for simplicity (i.e., assuming a quick response for operators). The time interval is set to $\tau = 10$ mins. The time period with recommendation is set as $h_H = 10$, corresponding to 9:44 - 9:54 AM (i.e., 50 minutes after the end of the incident).
In this study, we assume that the incident duration is known or can be reasonably estimated. The factor of total demand level $\Gamma$ is set to 1.1, which is the 90\% percentile of the total demand distribution.

The validation of the simulation model's performance is shown in \ref{sec_validate_sim}. Results show that the model can capture the passenger and vehicle interactions well in the CTA system.

\subsection{Quantification of uncertainty sets}\label{sec_quant_uncertainty_set}
The demand uncertainty is determined by the nominal demand $\bar{\boldsymbol{d}}$, covariance matrix $\boldsymbol{\Sigma}$ (which can be used to get $\boldsymbol{D}$), and upper and lower bounds for demand (i.e., $\boldsymbol{d}^{\text{U}}$, $\boldsymbol{d}^{\text{L}}$, $\boldsymbol{d}_{\mathcal{H}}^{\text{U}}$, $\boldsymbol{d}_{\mathcal{H}}^{\text{L}}$). These can be estimated from historical demand. However, as the demand on the incident day is smaller than usual given that some passengers may leave the system, we cannot directly use normal day smart card data as historical demand. One possible solution is to use data from previous days with similar incidents. Nevertheless, this is usually unavailable due to the lack of enough similar incidents. Hence, in this study, we first use survey results and historical smart card data to generate ``synthetic historical demand'' samples, and then estimate the uncertainty set from the samples.

There are two sources of demand uncertainty: 1) the inherent demand variations across different days and 2) the uncertainty of how many passengers left the PT system during the incident. The first part can be captured by historical smart card data (without incidents). The second part is approximated by the survey results. According to previous survey-based studies, the proportion of the passengers leaving the PT system during incidents is around 10\%$\sim$30\% \citep{lin2016transit,rahimi2020analysis}. Then, the ``synthetic historical demand'' is generated as follows:
\begin{itemize} 
    \item Collect smart card data from a recent workday and calculate the demand vector without passengers leaving the system for each $(h,k)$ (the demand for $h=0$, i.e., offloading passengers, can be obtained using the simulation model).
    \item For each $(h,k)$, we randomly draw a proportion of leaving passengers from a uniform distribution $\mathcal{U}(10\%, 30\%)$\footnote{We use uniform distribution because we have no distributional information of the leaving passenger proportions}. The demand after removing the leaving passengers is the incident period demand vector. 
\end{itemize}
We collected a total of 16 weekdays from Jan 2019 (the previous month of the incident day) and generated 16 sample demand vectors. The mean value is used as the nominal demand $\bar{\boldsymbol{d}}$ and the covariance matrix $\boldsymbol{\Sigma}$ is estimated from these samples. The upper and lower bounds for demand (i.e., $\boldsymbol{d}^{\text{U}}$, $\boldsymbol{d}^{\text{L}}$, $\boldsymbol{d}_{\mathcal{H}}^{\text{U}}$, $\boldsymbol{d}_{\mathcal{H}}^{\text{L}}$) are set as the samples' maximum and minimum values, respectively. 

The hyperparameter $\rho_{1 - \varepsilon}$ for the ellipsoidal uncertainty set are chosen from these values: $\{$0, 0.25, 0.52, 0.84, 1.28, 1.64, 2.33$\}$, which corresponds to the $\{$50, 60, 70, 80, 90, 95, 99$\}$ percentiles of the standard normal distribution. Note that $\rho_{1 - \varepsilon} = 0$ represents the case of no uncertainty (i.e., nominal model).


\subsection{Data description}

The nominal and actual (incident day) demand comparison is shown in Figure \ref{fig_compare}. The total nominal demand is 5,499, similar to the total actual demand (5,531), implying that introducing a proportion of leaving passengers (i.e., 10\% - 30\%) can capture the demand reduction on the incident day.  
We also observe that the aggregate nominal demand for each time interval is similar to that of the incident day. The major differences happen at the first two time intervals ($h=0,1$). However, looking at the demand for each $(h,k)$ (Figure \ref{fig_demand_hk}), the differences are more prominent. The discrepancy between nominal and actual demands indicates the potential for the RO approach to perform better. 

\begin{figure}[H]
\centering
\subfloat[Total demand for each time interval $h$]{\includegraphics[width=0.5\textwidth]{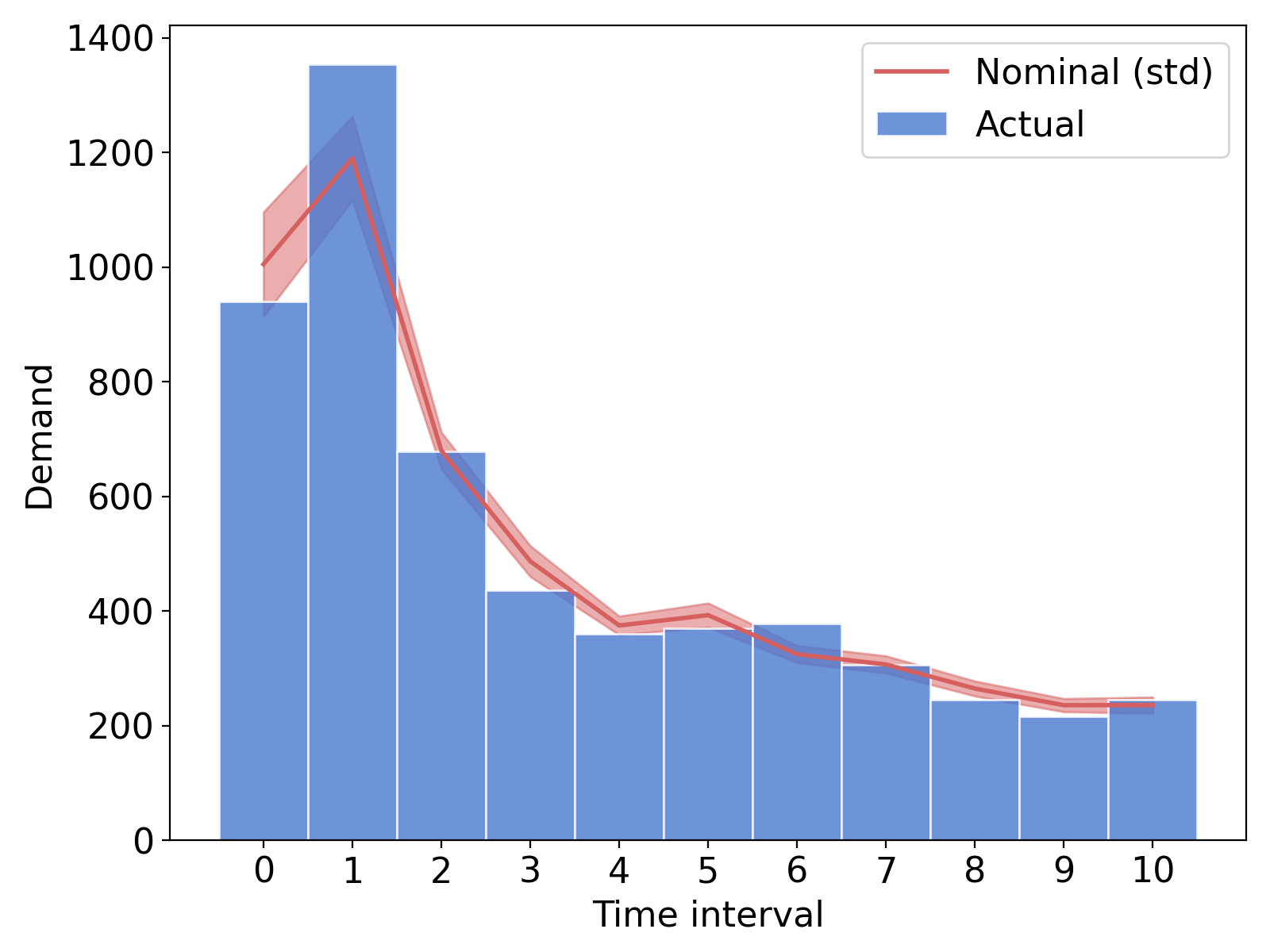}\label{fig_demand_total}}
\hfil
\subfloat[Demand comparison for each $(h,k)$]{\includegraphics[width=0.375\textwidth]{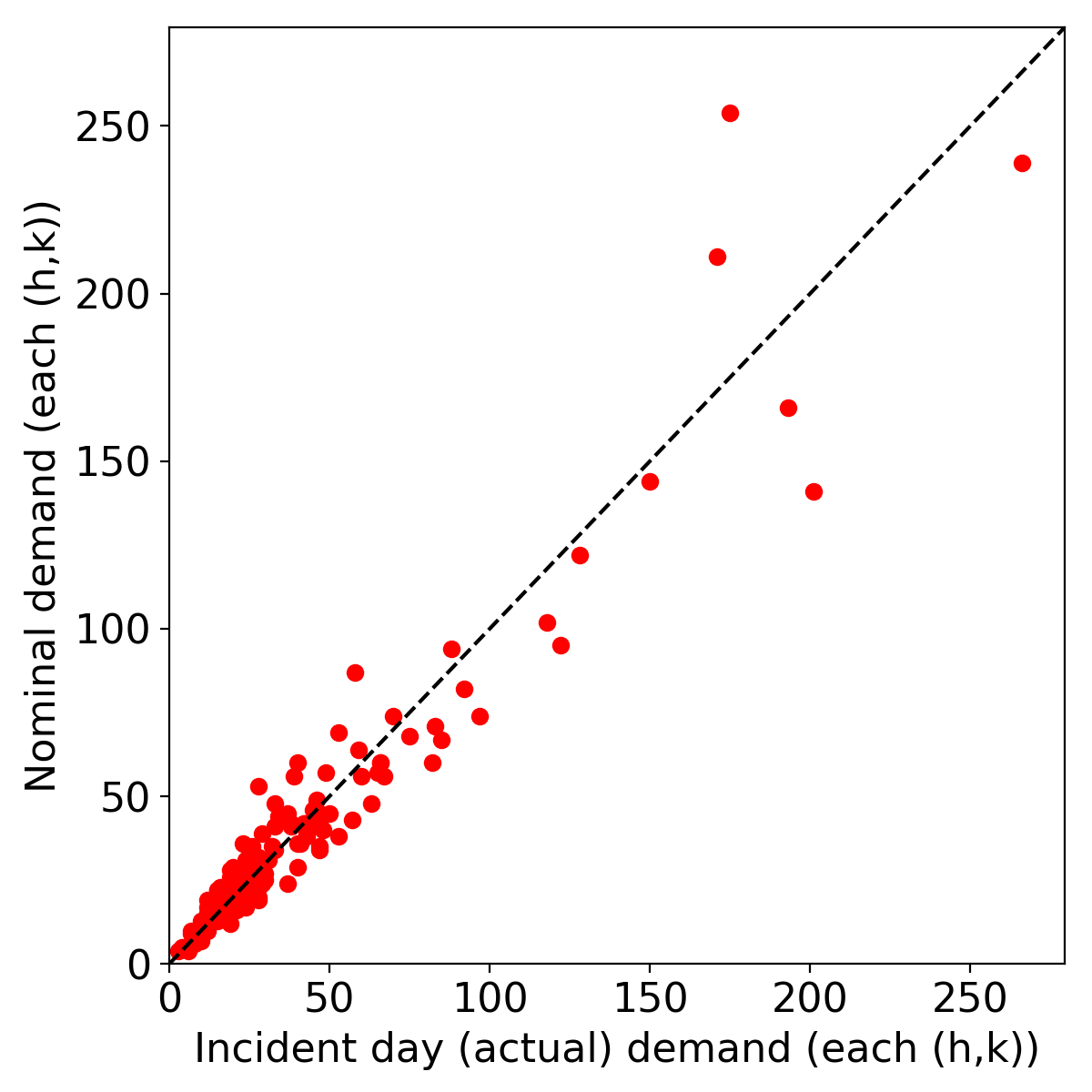}\label{fig_demand_hk}}
\caption{Demand patterns}
\label{fig_compare}
\end{figure}

Table \ref{tab_test} shows the results of the Mardia test of multivariate normality \citep{cain2017univariate} for demand samples. The Mardia test is used to check whether the sample's multivariate skewness and kurtosis are consistent with a multivariate normal distribution. If both are satisfied, we can assume the samples are multivariate normally distributed. We observe that, in Table \ref{tab_test}, the synthetic historical demands have consistent skewness but inconsistent kurtosis with the multivariate normal distribution, suggesting that they are not multivariate normally distributed. However, as skewness is a measure of the asymmetry of the probability distribution of a random variable about its mean, the Mardia Skewness testing shows that the demand distribution is symmetric. Hence, it is still reasonable to use the ellipsoidal uncertainty set to describe a symmetric distributed random variable. Moreover, as mentioned in Remark \ref{remark_1}, the distribution of a variable does not affect the definition of the uncertainty set (it only affects the calculation of probability guarantees).

\begin{table}[htbp]
\centering
\caption{Mardia test of multivariate normality}\label{tab_test}
\begin{tabular}{@{}llll@{}}
\toprule
Test            & p-value & Test            & p-value \\ \midrule
Mardia Skewness & 1.00    & Mardia Kurtosis & 0.00    \\ \bottomrule

\multicolumn{4}{p{8cm}}{\begin{tabular}[c]{@{}p{8.5 cm}@{}}\footnotesize{Note: The null hypothesis is that the samples are multivariate normally distributed. A small p-value indicates we are more likely to reject the null hypothesis.} \end{tabular}}

\end{tabular}
\end{table}

\subsection{Benchmark models}
The following approaches are used to obtain benchmark path shares.

\textbf{Uniform path shares}. The uniform path shares are defined as $p_{hkr} = \frac{1}{|R_k|} \; \forall \; r \in R_k$. This is a naive model corresponding to the intuition of ``distributing passengers to different paths'' when no information is available.   

\textbf{Capacity-based path shares}. The capacity-based path shares aim to assign passengers to different paths according to the path capacity. Specifically, for a path $r$ in OD pair $k$ and time $h$, we calculate the path capacity as the total available capacity of all vehicles passing through the first boarding station of the path (denoted as $C_{hkr}$). The capacity-based path shares are defined as
\begin{align}
    p_{hkr} = \frac{C_{hkr}}{\sum_{r \in R_k}C_{hkr}} \quad \forall \; r \in R_k, h \in \mathcal{H}, k \in \mathcal{K},
\end{align}
For example, for a path consisting of an NS bus route and the Green Line, $C_{hkr}$ is calculated as the total available capacity of all buses at the boarding station of the NS bus route during time interval $h$. The available capacity can be obtained from the simulation model using historical demand. The available capacity for the Blue Line (i.e., incident line) depends on the revised schedules during the incident (i.e., the service suspension is considered). When no trains operate on the Blue Line, the corresponding $C_{hkr}$ will be zero.

\textbf{Status-quo path shares}. The status-quo path shares are the inferred path choices of passengers on the incident day. During the incident period, the demand on the WE, NS, and parallel bus lines experienced an increase. The difference from the average demand on normal days can be seen as the number of passengers choosing the corresponding path. Hence, by identifying the demand increase for all nearby bus stops, we can get the number of passengers using the parallel bus, NS+Green, and WE+Brown paths for each OD pair $k$ and time interval $h$. However, the number of waiting passengers in the Blue Line cannot be directly inferred because the CTA system does not record the tap-out information. Hence, we approximate the proportion of waiting passengers based on survey results \citep{rahimi2019analysis}. \citet{rahimi2019analysis} used a survival model to analyze the waiting time tolerance of CTA riders during a service disruption. The model results provide the proportion of waiting passengers given different system recovery times. Therefore, the status-quo path shares are inferred as follows: 
\begin{itemize} 
    \item Step 1: Given the current time interval $h$ and the incident end time $T_e$, the remaining time until the end of the incident is $T_e - h$. Therefore, if passengers choose to wait, their waiting time will also be $T_e - h$. Based on the hazard model in \citet{rahimi2019analysis}, we can obtain the proportion of waiting passengers given the waiting time, denoted as $p_{\text{wait}}(T_e - h)$.
    \item Step 2: For each OD pair $k$ and time interval $h$, the number of passengers using the parallel bus, NS+Green, and WE+Brown paths can be calculated based on demand increase compared to the normal demand. Let the demand increase for path $r$ of OD pair $k$ at time $h$ be $DI_{hkr}$, where $r \in R_k\setminus\{r_\text{wait}\}$, $r_\text{wait}$ represents the path of waiting for the Blue Line.
    \item Step 3: The status quo path shares are calculated as follows:
    \begin{align}
        p_{hkr_\text{wait}} &= p_{\text{wait}}(T_e - h) \quad \forall \; h \in \mathcal{H}, k \in \mathcal{K},\\
        p_{hkr} &= (1-p_{hkr_\text{wait}})\cdot \frac{DI_{hkr}}{\sum_{r \in R_k\setminus\{r_\text{wait}\}} DI_{hkr}} \quad \forall \;r \in R_k\setminus\{r_\text{wait}\}, \; h \in \mathcal{H}, k \in \mathcal{K}
    \end{align}
\end{itemize}

\section{Results}\label{sec_results}
In this section, we demonstrate the model's performance in two steps. In the first step, results of the optimization model without uncertainty (i.e., the nominal model with $\rho_{1-\epsilon} = 0$) are compared with the three benchmark path shares. In the second step, we compare the results from the robust model with the results from the nominal model in order to assess the value of considering uncertainties in generating path recommendations.  

\subsection{Model convergence and computational time}\label{sec_model_converge}
Figure \ref{fig_convergence} shows the convergence of the nominal ($\rho_{1-\epsilon} = 0$) and robust (with $\rho_{1-\epsilon} = 0.84$) models. The simulation-based linearization and MSA successfully decrease the system travel time. The model converges within 35 iterations. Note that the optimal cost for the robust model is higher than the nominal model. This is expected since the robust model assumes the worst-case demand (by definition with higher system travel time). The performance of the corresponding path recommendations will be evaluated based on the actual demand (discussed in the next section).

\begin{figure}[htb]
\centering
\includegraphics[width = 0.6 \linewidth]{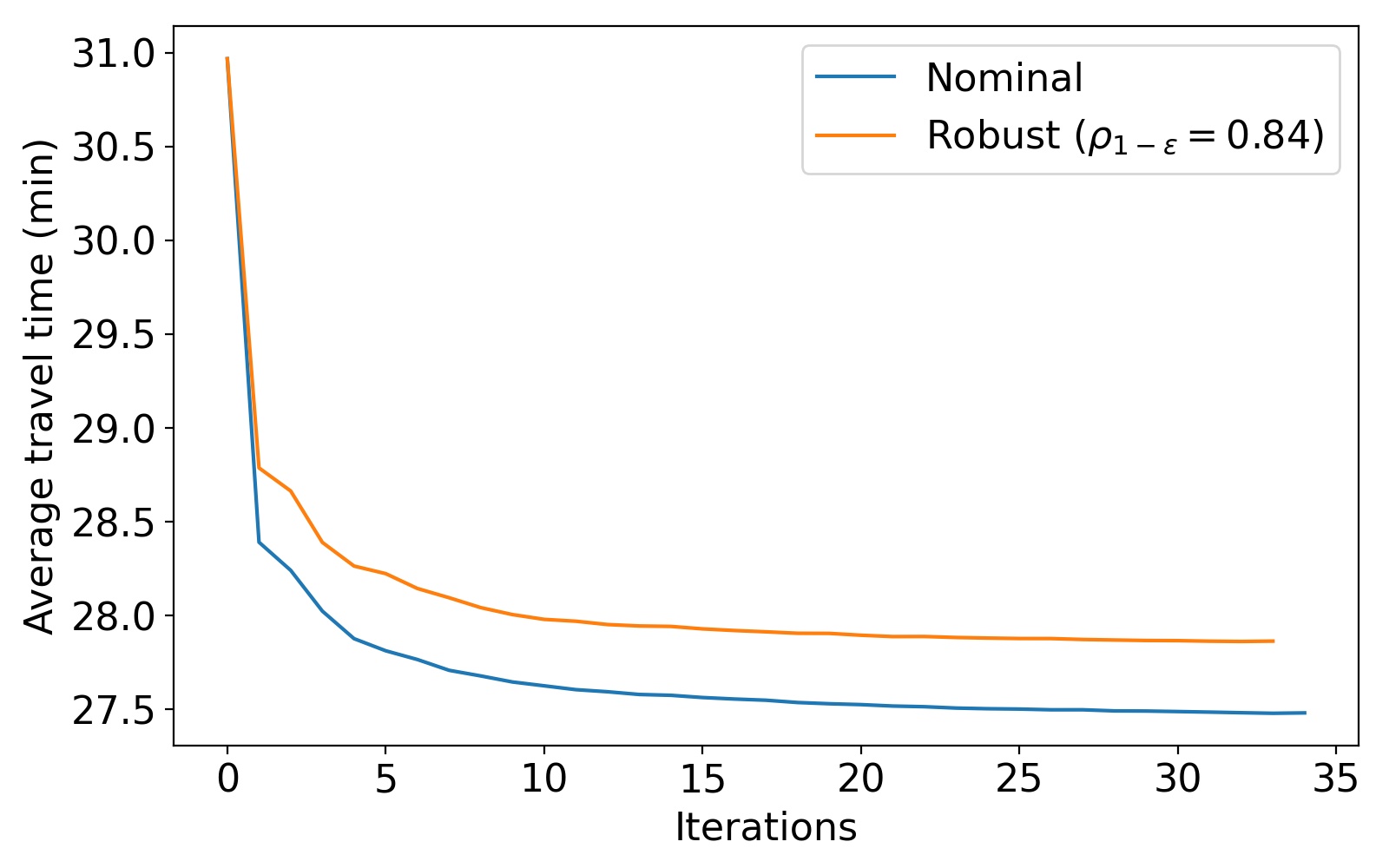}
\caption{Convergence of optimization models}
\label{fig_convergence}
\end{figure}

The mode's computational time mainly depends on the speed of the simulation and the number of iterations. Solving Eqs. \ref{eq_ro_all} and \ref{eq_WD} is quite efficient because of the tractable SOCP formulations. Currently, running one simulation for the CTA system for 2 hours takes around 0.6 minutes. Hence, the total solving time for a RO model (assuming 35 iterations) is around 23 minutes. 

The computational time of 23 minutes is too long for real-world rolling horizon implementation. However, since the bottleneck is the simulation process, future studies can improve the model's efficiency by enhancing the speed of the simulation model (e.g., code with C++). If the simulation time is reduced to 5 seconds, the total solution time will be reduced to around 3 minutes, which enables the real-world rolling horizon implementation with 15 minutes intervals (including times for updating inputs). Another way to simplify the model is to reduce the look ahead horizons. Now we consider a 2-hour horizon until the end of the incident. With rolling horizon implementation, since the incident information will be updated in real-time, we may only need to look ahead for 1 hour and wait for new information to come. Reducing the look-ahead time can significantly simplify the model and reduce the computational time.

\subsection{Model evaluation}
The optimization model only utilizes information about the nominal demand and the associated uncertainty set. The actual demand is unknown when running the model (otherwise there are no uncertainties). After obtaining the path shares (either from optimization or the benchmark models), the recommendation strategies are evaluated based on the actual incident day demand using the simulation model. We assume passengers would follow the path recommendation. The simulation model can output the travel times of every passenger in the system, and can be used to compare the performance of the path shares obtained from the various approaches. Performance is measured in terms of average travel time and average waiting time.

\subsection{Nominal vs. Benchmark models}

Table \ref{tab_result} compares the results for different path shares, The result of the no incident scenario is also shown for comparison. The average travel times are calculated over all passengers (a total of 27,007 passengers) and the passengers who originally planned to use the Blue Line (i.e., passengers who are provided with recommendations, a total of 5,531 passengers, a subset of the 27,007 passengers). Results show that the optimization-based path shares outperform all benchmark models. For all passengers in the system, the average travel time is reduced by 9.1\% compared to the status quo. And for the incident line passengers, the reduction is even higher (20.6\%). 

Recommendations based on the uniform path shares result in worse performance than the status quo scenario. This implies that current passengers' choices are not random and show some rationality. The capacity-based path shares can also reduce the system travel time significantly (by 6.9\%). However, as the capacity-based path recommendations do not capture the spatial and temporal changes in available capacity due to passenger flow redistribution, they are worse than the optimization-based results. A more comprehensive discussion on the performance comparison between the optimization model and the capacity-based model can be found in \ref{sec_discuss_cap}.

Compared to the no-incident scenario, we find that the influence of incidents is significant. Path recommendations can only alleviate the impact of service disruption but are far from eliminating. Even with the optimization-based path recommendations, we still have more than two times of travel time for incident-line passengers compared to the no-incident situation. 



\begin{table}[htbp]
\centering
\caption{Average travel time comparison}\label{tab_result}
\resizebox{\textwidth}{!}{
{\renewcommand{\arraystretch}{1} 
\begin{tabular}{@{}lcccc@{}}
\toprule
Scenarios            & \multicolumn{2}{c}{All passengers (\# 27,003)} & \multicolumn{2}{c}{Incident-line passengers (\# 5,531)} \\ \cmidrule(l){2-5} 
                       & Avg travel time (min)        & \% change$^1$       & Avg travel time (min)            & \% change$^1$            \\ \midrule
No incident                & 21.81                        & -          & 18.95                            & -               \\\hline
Uniform                & 31.02                        & +1.7\%          & 54.64                            & +6.4\%               \\
Status quo             & 30.49                        & 0\%               & 51.34                            & 0\%                     \\
Capacity-based         & 28.36                        & -6.9\%          & 43.23                            & -15.8\%              \\
\textbf{Optimization (nominal)} & 27.71                        & \textbf{-9.1\%}          & 40.75                            & \textbf{-20.6\%}              \\ \bottomrule

\multicolumn{4}{l}{\begin{tabular}[c]{@{}l@{}}$^1$: changes compared to the status quo scenario \end{tabular}}

\end{tabular}
}
}
\end{table}

Figure \ref{fig_compare_tt} shows the average travel time and waiting time for different paths for all incident line passengers. We observe that the optimization-based path recommendations have more consistent travel time across the four types of paths, implying a better utilization of the system's capacity. However, for other recommendation strategies, passengers using parallel buses have significantly longer travel times than those using other alternatives. Figure \ref{fig_compare_tt} also shows that the average waiting time for the status quo scenario is around 30 minutes, which means most passengers chose to use the parallel bus during the incident, causing severe congestion. However, with the optimization-based path shares, the average waiting time for the parallel bus is less than 5 minutes (around a headway). 

\begin{figure}[H]
\centering
\subfloat[Average travel time for different paths]{\includegraphics[width=0.5\textwidth]{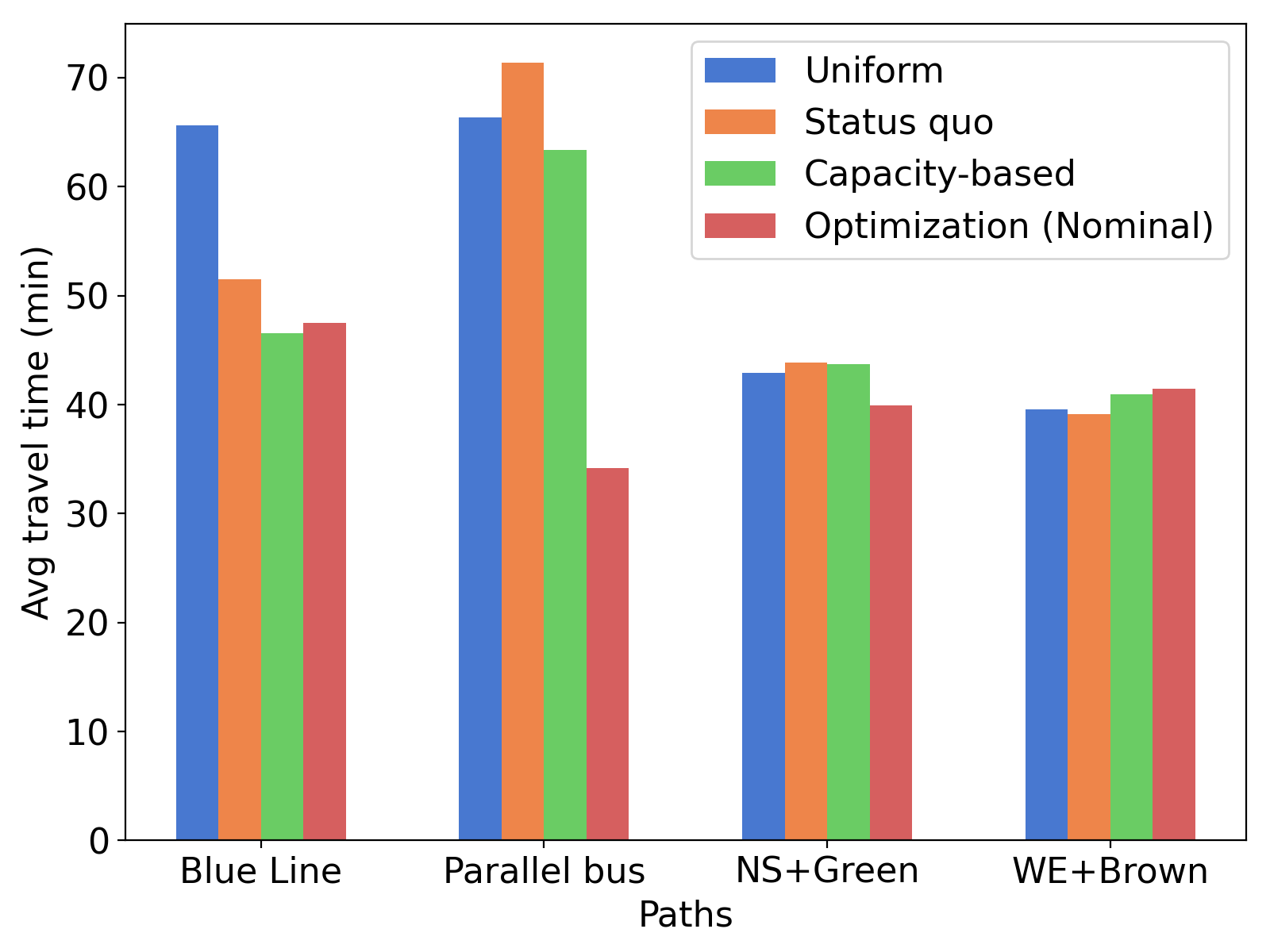}\label{fig_travel_time}}
\hfil
\subfloat[Average waiting time for different paths]{\includegraphics[width=0.5\textwidth]{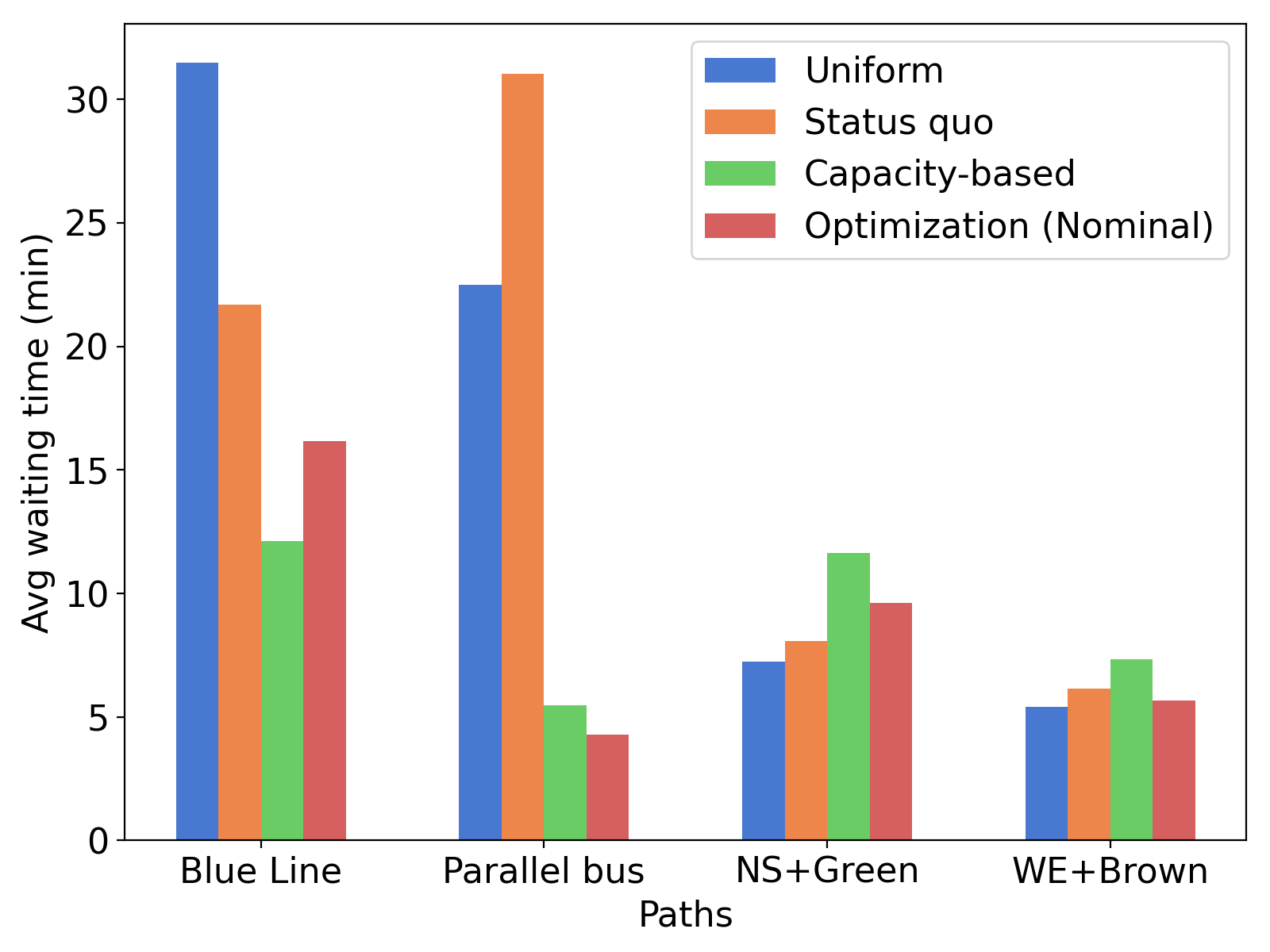}\label{fig_waiting}}
\caption{Comparison of average travel time and waiting time of different paths for incident line passengers}
\label{fig_compare_tt}
\end{figure}

The objective of this study is to minimize the system travel time. However, under the optimal path shares, some passengers' travel time may be increased compared to the status quo. Figure \ref{fig_tt_changes} shows the distribution of changes in individual travel time (optimization-based minus the status quo) for all passengers whose path choice under the recommendation scenario is different than their choice in the status quo scenario. Most passengers experience lower travel times. However, some passengers become worse off after following the path recommendations. This is a typical drawback of system optimal (first-best) assignment \citep{lawphongpanich2010solving}. Future studies may explore a Pareto-improving (second-best) path recommendation that ensures no individual becomes worse-off. In reality, when implementing the recommendations, some paths that lead to extremely worse travel time compared to the status quo can be dropped from the solution. 

\begin{figure}[htb]
\centering
\includegraphics[width = 0.9 \linewidth]{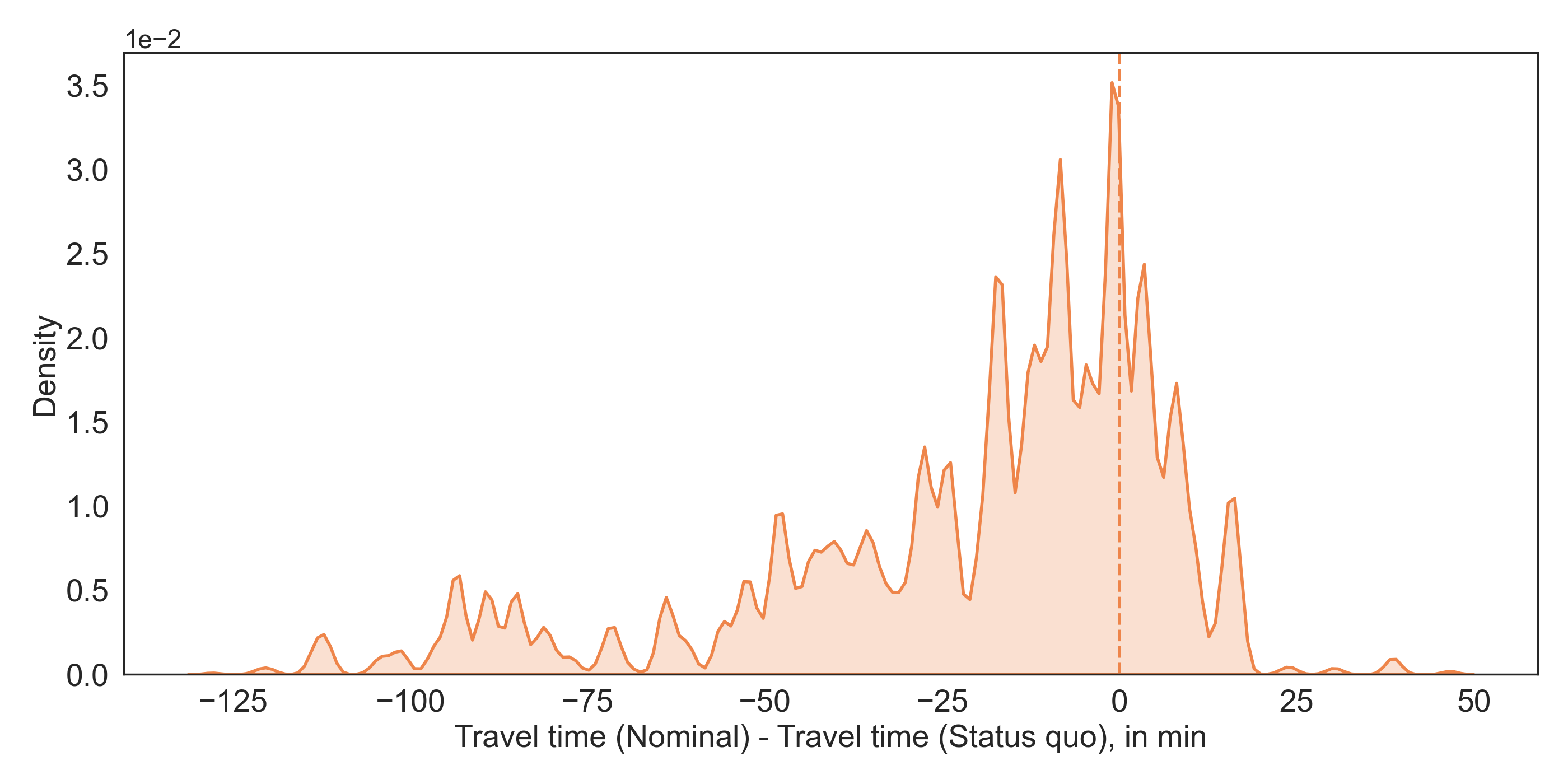}
\caption{Distribution of the change in individual travel time (not including passengers without changes as they will distort the distribution with too much density concentrated at zero)}
\label{fig_tt_changes}
\end{figure}

\subsection{Robust models vs. Nominal model}
\subsubsection{Model comparison under actual demand}\label{sec_model_ad}
Figure \ref{fig_RO} compares the results, in terms of travel time, of the RO approach with different values of $\rho_{1-\epsilon}$ under the actual demand. For all values of the robust model except for $\rho_{1-\epsilon} = 2.33$, the RO approach shows better performance than the nominal model. This implies that considering the demand uncertainty in determining the recommendation can further improve the effectiveness of path recommendation strategies. The best value is $\rho_{1-\epsilon} = 0.84$, where the travel time for the incident line passengers is reduced by 2.91\% compared to the nominal model. Note that the percentage decreases are relatively small because some passengers' travel times are not changed. If we only look at incident-line passengers with travel time changes, the average travel times are 47.6 min and 37.9 min for the nominal and RO ($\rho_{1-\epsilon} = 0.84$) scenarios, respectively, where the travel time reductions are 20.4\%. 

\begin{figure}[H]
\centering
\includegraphics[width = 0.6 \linewidth]{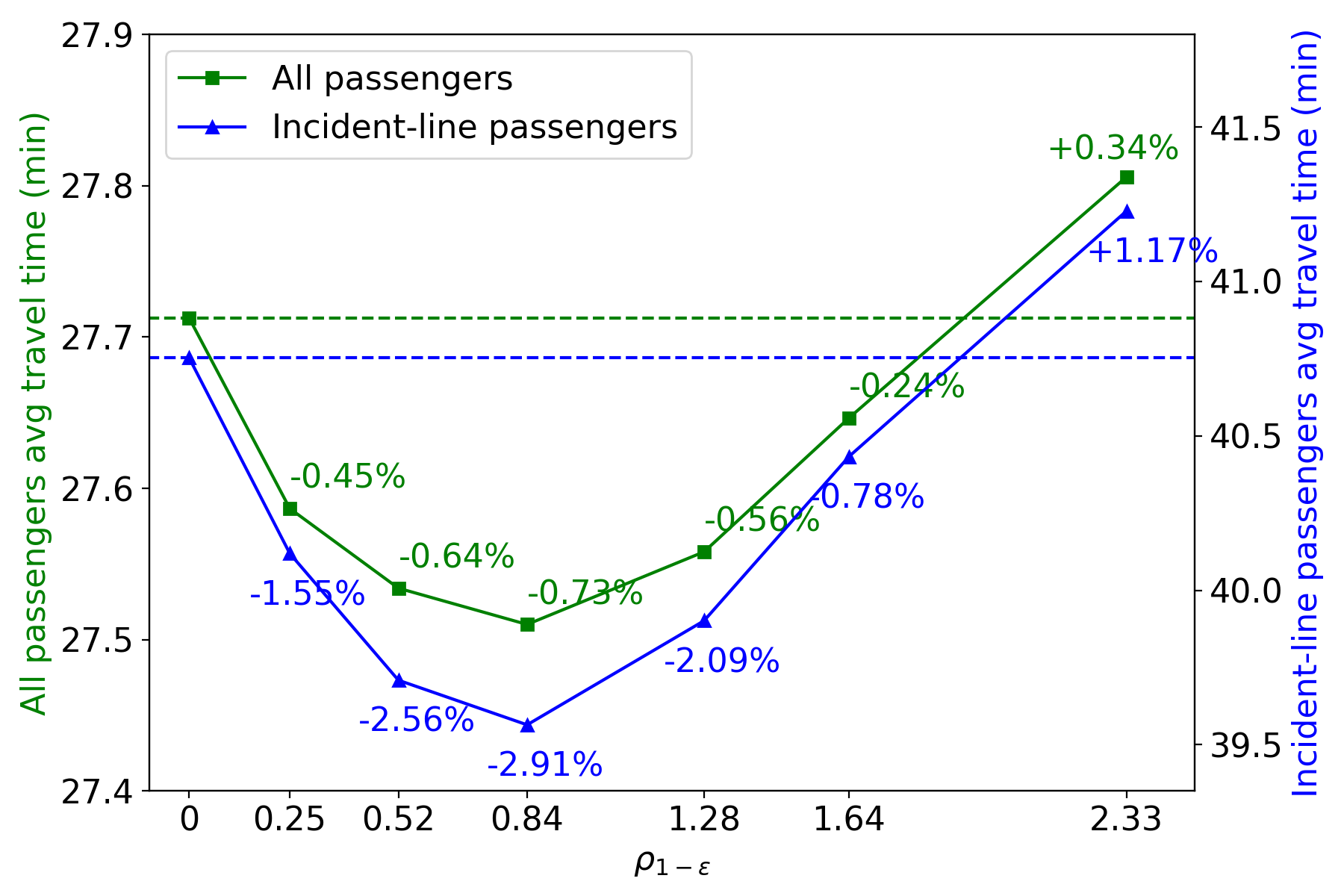}
\caption{Performance of RO. The percentage changes are compared to the nominal scenario}
\label{fig_RO}
\end{figure}

Note that using $\rho_{1-\epsilon} = 2.33$ results in the largest uncertainty set compared to other values. This reflects a very conservative scenario where the agency prefers to plan against a very high realization of demand. In this case, the worst-case demand patterns may deviate from the actual demand too much, thus performing worse than the nominal model. Figure \ref{fig_wd} illustrates the worst-case demand for different values of $\rho_{1-\epsilon}$. The worst-case demands for the $\rho_{1-\epsilon} = 0.52, 0.84, 1.28$ scenarios are closer to the actual demand, while $\rho_{1-\epsilon} = 2.33$ overestimates the demands, especially for the earliest periods ($h=0, 1$) (which are the most critical periods). These results are consistent with the travel time performance in Figure \ref{fig_RO}.

\begin{figure}[H]
\centering
\includegraphics[width = 0.6 \linewidth]{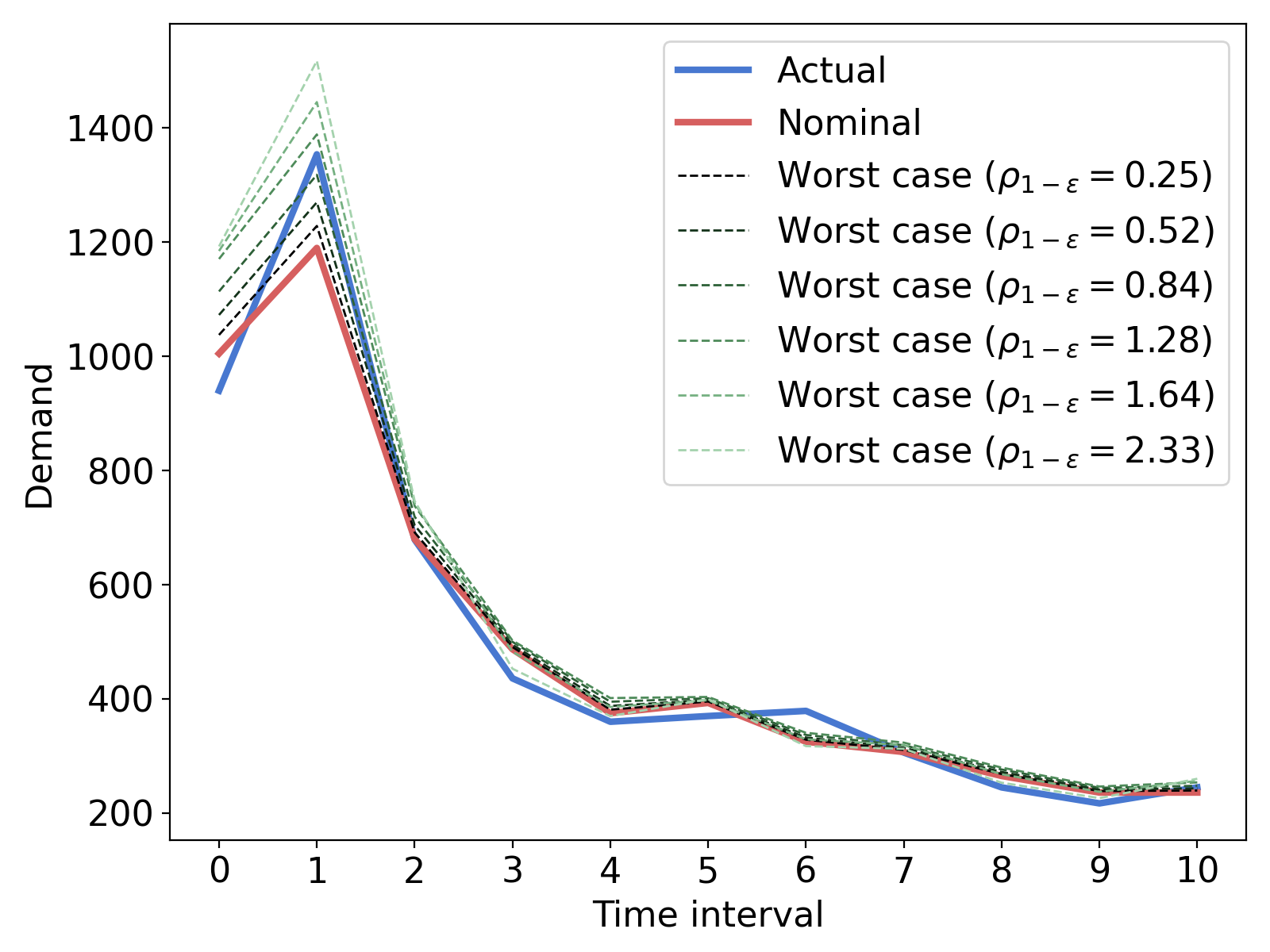}
\caption{Worst-case demand patterns}
\label{fig_wd}
\end{figure}

\subsubsection{Model comparison under random demand}\label{sec_model_rd}
To further validate the model's performance, we test the performance of the solution obtained from the RO approach on the 16 demand samples generated in Section \ref{sec_quant_uncertainty_set}.  These demand samples represent different possible realizations of the incident day demand. Figure \ref{fig_rd} shows the compassion of the random demand samples versus the actual and nominal demands. Notice that the random demand samples include both high and low demand scenarios, which can better validate the performance of the RO approach under different demand patterns. 

\begin{figure}[H]
\centering
\includegraphics[width = 0.6 \linewidth]{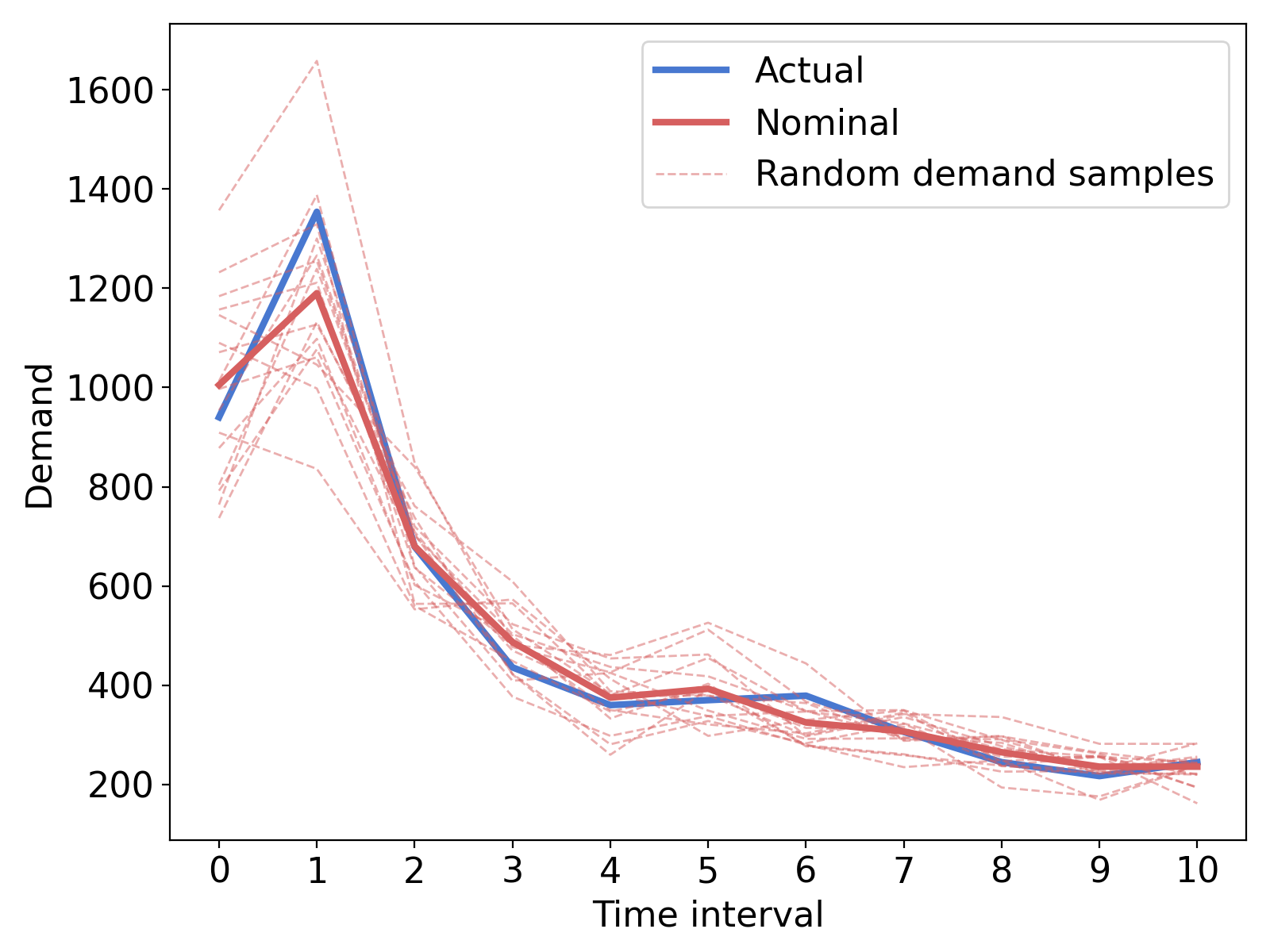}
\caption{Random demand patterns for experiments}
\label{fig_rd}
\end{figure}

Table \ref{tab_result_RO} compares the results of average travel time for different RO models. The numbers in the table are the mean values of the 16 experiments. The performances are similar to the results under the actual demand.  The RO approach shows better performance than the nominal model for all values of $\rho_{1-\epsilon}$ the robust model except for $\rho_{1-\epsilon} = 2.33$. The reasons may be that the RO approach focuses more on critical OD pairs and time intervals where the path recommendations for them are considered more important for system performance. 
\begin{table}[htbp]
\centering
\caption{Average travel time comparison for RO models}\label{tab_result_RO}
{\renewcommand{\arraystretch}{1} 
\begin{tabular}{@{}lcccc@{}}
\toprule
Models            & \multicolumn{2}{c}{All passengers} & \multicolumn{2}{c}{Incident-line passengers} \\ \cmidrule(l){2-5} 
                       & Avg travel time (min)        & \% change$^1$       & Avg travel time (min)            & \% change$^1$            \\ \midrule
Nominal ($\rho_{1-\epsilon} = 0$)               &  27.79                       & -          & 41.08                           & -               \\
$\rho_{1-\epsilon} = 0.25$               & 27.70                        & -0.32\%          & 40.57                            & -1.23\%               \\
$\rho_{1-\epsilon} = 0.52$            & 27.65                        & -0.48\%               & 40.24                            & -2.05\%                     \\
$\rho_{1-\epsilon} = 0.84$         & 27.64                        & -0.54\%          & 40.13                  & -2.31\%              \\
$\rho_{1-\epsilon} = 1.28$ & 27.68                        & {-0.39\%}          & 40.41                            & {-1.62\%}              \\ 
$\rho_{1-\epsilon} = 1.64$ & 27.74                        & {-0.17\%}          & 40.83                            & {-0.60\%}              \\ 
$\rho_{1-\epsilon} = 2.33$ & 27.86                        & {+0.27\%}          & 41.47                            & {+0.96\%}   \\          
\bottomrule
\multicolumn{4}{l}{\begin{tabular}[c]{@{}l@{}}$^1$: changes compared to the nominal model \end{tabular}}

\end{tabular}
}
\end{table}

\subsubsection{Model comparison under worst-case demand}
Theoretically, the RO model should show the best performance under the worst-case demand. Table \ref{tab_result_WD_compare} shows the comparison between the nominal model and RO models under the worst-case demand (the worst-case demand patterns are shown in Figure \ref{fig_wd}). For each value of $\rho_{1-\epsilon}$, we obtain the results for both the nominal model and the robust model by assuming the actual demand is the worst-case demand. Note that with a higher value of $\rho_{1-\epsilon}$, the worst-case demand patterns become worse and the average travel time of passengers will increase regardless of which model we use. 

As shown in Table \ref{tab_result_WD_compare}, if the ``actual demand'' (i.e., the demand for model evaluation) is the worst-case demand, the robust models consistently outperform the nominal models. And the improvement is higher than that of the previous experiments. Moreover, the higher the value of $\rho_{1-\epsilon}$ (i.e., more extreme demand patterns), the higher the improvement of robust models compared to the nominal models. This emphasizes the importance of considering demand uncertainties under extreme demand patterns. 

\begin{table}[htbp]
\centering
\caption{Average travel time comparison under the worst-case demands}\label{tab_result_WD_compare}
\resizebox{\textwidth}{!}{
{\renewcommand{\arraystretch}{1} 
\begin{tabular}{@{}cccccc@{}}
\toprule
\multirow{2}{*}{Uncertainty set}    & \multicolumn{1}{l}{\multirow{2}{*}{Total demand}} & \multicolumn{2}{c}{All passengers avg travel time (min)} & \multicolumn{2}{c}{Incident-line passengers avg travel time (min)} \\ \cmidrule(l){3-6} 
                           & \multicolumn{1}{l}{}                              & Nominal model         & Robust model (\% change$^1$)         & Nominal model              & Robust model (\% change)              \\ \midrule
$\rho_{1-\epsilon} = 0.25$ & 27,084                                             & 27.99                 & 27.67 (-1.14\%)                                & 41.77                      & 40.43 (-3.21\%)                                   \\
$\rho_{1-\epsilon} = 0.52$ & 27,195                                             & 28.12                 & 27.75 (-1.32\%)                          & 42.14                      & 40.62 (-3.61\%)                                       \\
$\rho_{1-\epsilon} = 0.84$ & 27,344                                             & 28.26                 & 27.86 (-1.41\%)                                & 42.47                      & 40.89 (-3.72\%)                                   \\
$\rho_{1-\epsilon} = 1.28$ & 27,525                                             & 28.37                 & 27.91 (-1.62\%)                                 & 42.99                      & 41.22 (-4.12\%)                                  \\
$\rho_{1-\epsilon} = 1.64$ & 27,522                                             & 28.61                 & 28.05 (-1.96\%)                               & 43.81                      & 41.74 (-4.72\%)                                    \\
$\rho_{1-\epsilon} = 2.33$ & 27,520                                             & 28.93                 & 28.28 (-2.25\%)                                 & 45.28                      & 42.65 (-5.80\%)                                   \\ \bottomrule
\multicolumn{4}{l}{\begin{tabular}[c]{@{}l@{}}$^1$: changes compared to the nominal model with the same worst-case demand \end{tabular}}

\end{tabular}
}
}
\end{table}

\section{Conclusion and discussion}\label{sec_conclusion}
In this paper, we propose a path recommendation model to mitigate congestion during public transit disruptions. Passengers with different ODs and departure times are recommended alternative paths to use such that the total system travel time is minimized. To tackle the non-analytical formulation of travel times due to left behind, we propose a simulation-based first-order approximation to transform the original problem into a linear program and solve the new problem iteratively with MSA. Uncertainties in demand are modeled using RO techniques to protect the path recommendation strategies against inaccurate estimates. A real-world rail disruption scenario in the CTA system is used as a case study. Results show that even without considering uncertainty, the nominal model can reduce the system travel time by 9.1\% (compared to the status quo), and outperforms the benchmark capacity-based path recommendation. The average travel time of passengers in the incident line is reduced more (-20.6\% compared to the status quo). After incorporating the demand uncertainty, the robust model further reduces the system travel time. The best robust model with $\rho_{1-\epsilon} = 0.84$ decreases the average travel time of incident-line passengers by 2.91\% compared to the nominal model. 

The performance improvement by incorporating demand uncertainty is not very significant. The reason may be that demand variations at the incident situation have a limited impact on the optimal path shares. Notice that the demand during an incident is already very high for the system (due to the reduced supply level). Hence, the path recommendation patterns under nominal and worst-case demand may be similar. However, the methodology presented in this study provides a general way to deal with PT demand uncertainty. It can be used for other operations control, optimization, planning, or recommendation applications.

Though we discussed potential model extensions with rolling horizon and incident duration uncertainty, we did not implement these extensions in the case study as the focus has been on the methodology for solving the problem. Incorporating real-time information as an adaptive RO would generally increase model performance \citep{bertsimas2011theory}. This presents an interesting future research direction. Other future research directions include the following. 1) Current demand uncertainty sets need to be quantified with a budget factor $\rho_{1 - \varepsilon}$. The choice of budget factor usually relies on numerical testing \citep{bertsimas2012adaptive, guo2021robust}. Future studies may also develop data-driven uncertainty quantification methods to automate the hyperparameter tuning task. 2) As shown in Figure \ref{fig_tt_changes}, the system optimal path recommendation may result in worse-off travel time for some passengers, causing equity and fairness issues. Future studies may consider incorporating Pareto-improving constraints to ensure that all passengers are better-off if following our recommendation. 3) In this study, we assume that passengers follow the recommendation. Non-compliance, however, if present, may lead to the actual path flows deviating from the optimal ones. Future research may focus on approaches for path recommendations that capture behavior uncertainty. 4) Finally, this study presents an OD-based (aggregated) path recommendation regime. Passengers with the same OD and departure time are treated homogeneously. In reality, different passengers may have different preferences on path choices. And these preferences can affect their compliance with recommendations. Future studies can develop an individualized path recommendation system considering heterogeneous passenger preferences.

\section{Authors’ contribution}
\textbf{Baichuan Mo}: Conceptualization, Methodology, Software, Formal analysis, Data Curation, Writing - Original Draft, Writing - Review \& Editing, Visualization.  \textbf{Haris N. Koutsopoulos}: Conceptualization, Supervision, Formal analysis, Writing - Review \& Editing, \textbf{Zuo-Jun Max Shen}: Conceptualization, Supervision. \textbf{Jinhua Zhao:} Conceptualization, Supervision, Project administration, Funding acquisition.

\section{Acknowledgement}
The authors would like to thank Chicago Transit Authority (CTA) for their support and data availability for this research. 

\appendix
\appendixpage
\section{Capturing supply changes by adjusting timetable}\label{sec_sim_supply_change}
Since all the operations can be described by timetable, it is reasonable to capture the supply changes in disruptions by adjusting the timetable. In this appendix, we show that our simulation model is able to capture the “partially blocked tracks” at a platform with complex configurations (e.g., different train capacities or different lines). That is, even if only one specific track failed in a platform, this type of disruption can be captured by the change in vehicles' timetables. 

For example, consider a platform with 2 different lines A and B (Figure \ref{fig_multi_platform}). Suppose that only the track associated with Line A is blocked. And operators decide to let Lines A and B share the same remaining track. This operation change can be captured by adjusting the train’s timetable for Lines A and B (i.e., trains may have higher headway, and they cannot use the platform simultaneously in the new timetable). In this example, the second vehicle of Line A is delayed to 7:30 and the second vehicle of Line B is delayed to 7:40. 

\begin{figure}[H]
\centering
\includegraphics[width = 0.7 \linewidth]{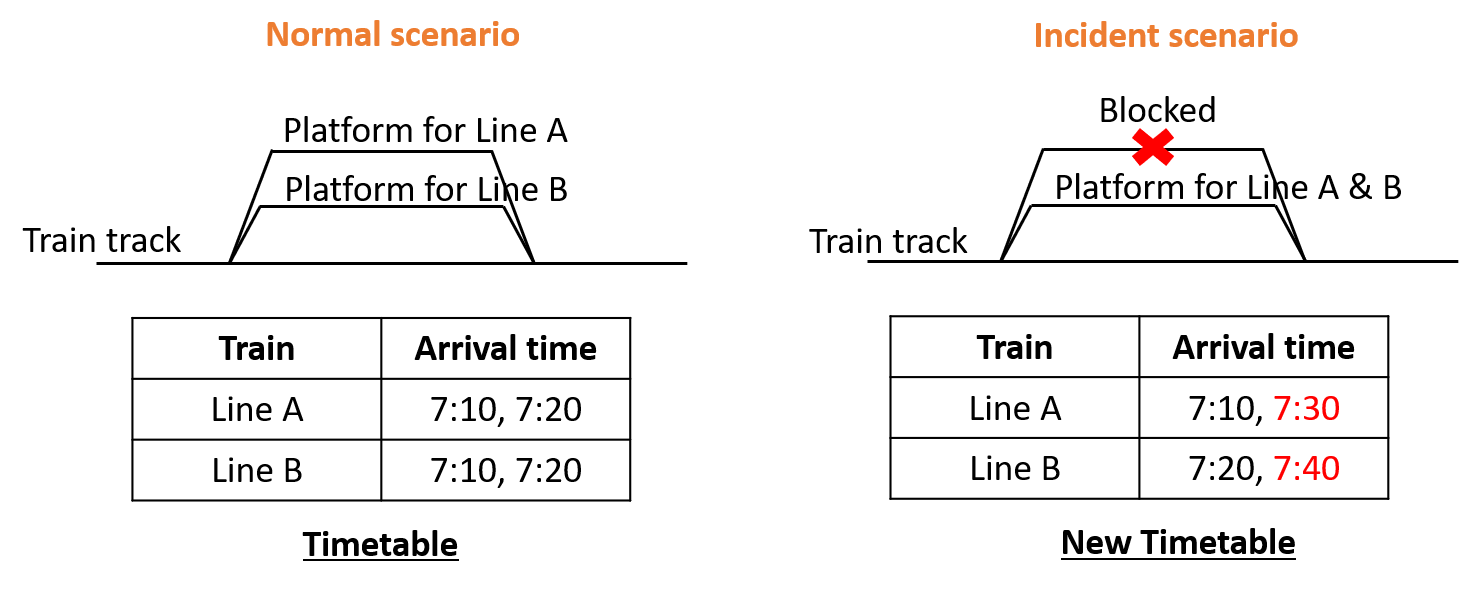}
\caption{Example of representing supply changes as timetable changes in a multi-platform scenario}
\label{fig_multi_platform}
\end{figure}

\section{Simulation-based first-order approximation}\label{sec_sim_first_order}

\subsection{Calculation of $T_{hkr}^{\text{Q}}(\boldsymbol{\tilde{f}})$}\label{sec_sim_TQ}
Let $\mathcal{V}_{hkr}^{b}$ be the set of vehicles that the $\mathcal{M}_{hkr}$ passengers board at station $b$. Adding an additional passenger to $\mathcal{M}_{hkr}$ means one more passenger boards one of the vehicles in $\mathcal{V}_{hkr}^{b}$. Let $\mathbbm{1}_{\{\text{Full}_v^b\}}$ be an indicator of whether vehicle $v$ is full or not after its departure from station $b$. Then the total increase in system travel time for passengers queuing behind $\mathcal{M}_{hkr}$ is:
\begin{align}
T_{hkr}^{\text{Q}}(\boldsymbol{\tilde{f}}) =\sum_{b\in\mathcal{B}_{hkr}} \sum_{v \in \mathcal{V}_{hkr}^{b}}\frac{\mathbbm{1}_{\{\text{Full}_v^b\}} \cdot W_v^b}{|\mathcal{V}_{hkr}^{b}|}
\end{align}
where $\mathcal{B}_{hkr}$ is the set of all boarding stations for $\mathcal{M}_{hkr}$ passengers (in the example of Figure \ref{fig_marginal_cost}, $\mathcal{B}_{hkr} = \{a_1, a_5\}$). $W_v^b$ is the headway of vehicle $v$ at station $b$. The sum over all vehicles is because we do not specify the exact vehicle that the additional passenger will board, and thus take the average over all vehicles. In the example of Figure \ref{fig_marginal_cost}, since there are two boarding stations for $\mathcal{M}_{hkr}$ ($a_1, a_5$), $T_{hkr}^{\text{Q}}(\boldsymbol{\tilde{f}})$ is approximately two headways if the vehicles are full.

\subsection{Calculation of $T_{hkr}^{\text{O}}(\boldsymbol{\tilde{f}})$}\label{sec_sim_TO}
Let $\mathcal{O}_{hkr}^v$ be the set of all on-board stations for $\mathcal{M}_{hkr}$ and vehicle $v \in \mathcal{V}_{hkr}^{b}$. For example, for vehicles in Line 1 in Figure \ref{fig_marginal_cost}, $\mathcal{O}_{hkr}^v$ will be $a_2$, $a_3$, and $a_4$. Then the travel time increase for passengers waiting at on-board stations is:
\begin{align}
T_{hkr}^{\text{O}}(\boldsymbol{\tilde{f}}) =\sum_{b\in\mathcal{B}_{hkr}} \sum_{v \in \mathcal{V}_{hkr}^{b}}\frac{1}{{|\mathcal{V}_{hkr}^{b}|}}\sum_{a \in \mathcal{O}_{hkr}^{v}} {\mathbbm{1}_{\{\text{Full}_v^a\}} \cdot W_v^a}
\end{align} 

\section{Path-passenger matching}\label{sec_path_pass}
After obtaining the optimal path shares $p_{hkr}^*$, the operator may need to know which path to provide to each such that the final path shares are close to  $p_{hkr}^*$, especially when passengers have different preferences and may not follow the unpreferred recommendations. In this section, we define a path-passenger matching problem as a solution for this challenge.

Consider a passenger $j$ with a path set $\mathcal{R}_i$. His/her inherent preference (utility) of using path $r\in \mathcal{R}_i$ is denoted as $V_{i}^{r}$. If path $r'$ was recommended, the impact of the recommendation on the utility of path $r$ is denoted as $I_{j,r'}^r$. Hence, his/her overall utility of using path $r$ can be represented as
\begin{align}
    U_{j}^r = V_{j}^{r} + \sum_{r' \in \mathcal{R}_j}x_{i,r'} \cdot I_{j,r'}^r + \xi_{j}^{r} \quad \forall r \in \mathcal{R}_i, j \in \mathcal{J}
\end{align}
where $\xi_{j}^{r}$ is the random error. $\mathcal{J}$ is the set of all passengers that need recommendations. $x_{j,r'} = 1$ if passenger $j$ is recommended path $r'$, otherwise $x_{j,r'} = 0$. Let $ \pi_{i,r'}^r$ be the conditional probability that passenger $j$ chooses path $r$ given that the recommended path is $r'$. Assuming a utility-maximizing behavior, we have
\begin{align}
    \pi_{j,r'}^r = \mathbb{P}( V_{j}^{r} + I_{j,r'}^r + \xi_{j}^{r} \geq V_{j}^{r''} +  I_{j,r'}^{r''} + \xi_{j}^{r''}, \; \forall r'' \in \mathcal{R}_j) 
\end{align}
Different assumptions for the distribution of $\xi_{j}^{r}$ can lead to different expressions. For example, if $\xi_{p}^{r}$ are i.i.d. Gumbel distributed, the choice probability reduces to multinomial logit model \citep{train2009discrete} and we have 
\begin{align}
    \pi_{j,r'}^r = \frac{\exp(V_{j}^{r} +  I_{j,r'}^r) }{\sum_{r''\in \mathcal{R}_j}  \exp(V_{j}^{r''} + I_{j,r'}^{r''} )}
    \label{eq_pi_prob}
\end{align}
The value of $ V_{j}^{r}$ and $I_{j,r'}^r$ can be calibrated using data from an individual-level survey or smart card, which deserves separate research. For those without such information, this information can be approximated by the population average. When developing the path-passenger matching formulation, we assume $\pi_{j,r'}^r$ is known. Figure \ref{fig_bh_un} shows an example for the conditional probability matrix. The specific values assume that paths with recommendations are more likely to be chosen.  
\begin{figure}[H]
\centering
\includegraphics[width = 0.6 \linewidth]{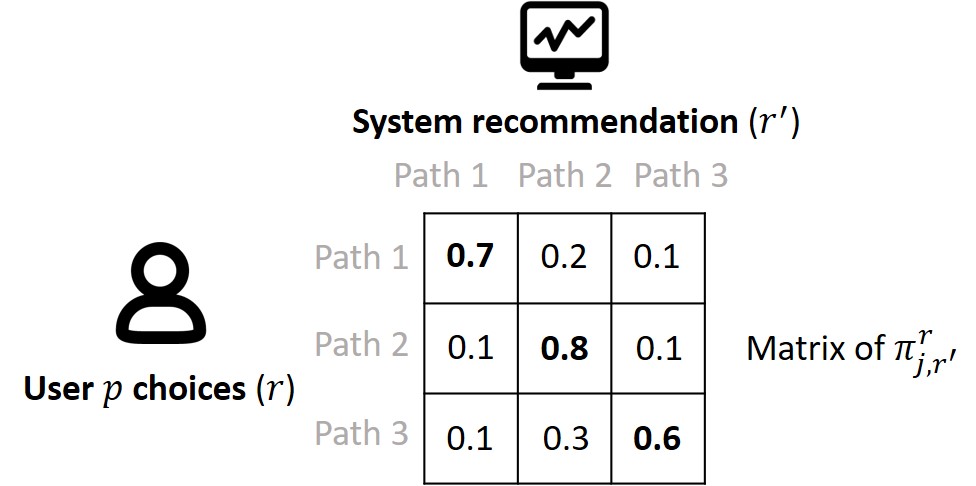}
\caption{Example of conditional path choice probability}
\label{fig_bh_un}
\end{figure}
The conditional probability $\pi_{j,r'}^r$ captures the individual's inherent preference for different paths as well as the response to the recommendation system.

The expected path flow for $(h,k,r)$ is 
\begin{align}
{\mu}_{hkr}(\boldsymbol{x}) =  \sum_{j\in \mathcal{J}_{hk}} \sum_{r' \in \mathcal{R}_k}x_{j,r'} \cdot \pi_{j,r'}^r + q_{hkr} \label{eq_mean_flow} \quad \forall (h,k,r)\in \mathcal{F}
\end{align}
where $\boldsymbol{x} =: (x_{j,r})_{j\in\mathcal{J},r\in\mathcal{R}_j}$. $q_{hkr}$ (constant) is the flow of passengers in $(h,k,r)$ that do not need recommendations. $ \mathcal{J}_{hk} \subseteq \mathcal{J}$ is the set of passengers  with OD pair $k$ and departure time $h$ who need recommendations. 

Suppose the value of $\pi_{p,r'}^r$ is known, we can formulate the path-passenger matching problem as an integer linear program:
\begin{subequations}
\label{eq_ipr}
\begin{align}
    \quad \min_{\boldsymbol{x}} \quad & \sum_{(h,k,r) \in \mathcal{F}} |{\mu}_{hkr}(\boldsymbol{x}) - d_{hk}\cdot p_{hkr}^*| \\
    \text{s.t.} \quad
    & \text{Constraint } (\ref{eq_mean_flow})  \\
    & d_{hk} = \sum_{r\in\mathcal{R}_{k}} \mu_{hkr}(\boldsymbol{x}) \quad \forall h\in\mathcal{H},k
    \in \mathcal{K}\\
     & \sum_{r\in R_j} x_{j,r} = 1 \quad \forall j \in \mathcal{J} \\
    & x_{j,r} \in \{0,1\} \quad  \forall j \in \mathcal{J}, r\in \mathcal{R}_j \label{const_IRR2}
\end{align}
\end{subequations}
The objective function aims to minimize the difference between the expected path flow and the optimal path flow. Solving Eq. \ref{eq_ipr} yields which path should be recommended to each passenger. It is worth noting that one could also solve the path-passenger matching problem and the path recommendation problem simultaneously, which is equivalent to an individual-based path recommendation problem \citep{mo2023individual}.

\section{Validation of the simulation model}\label{sec_validate_sim}
Usually, a transit simulator is validated by ``OD exit flow'' (i.e., the number of tap-out passengers for a specific OD pair and time interval). This is because we only input the ``OD entry flow'' (i.e., the number of tap-in passengers for a specific OD pair and time interval). Since the simulator will output the tap-out time for each passenger, comparing the model-output OD exit flow with the ground truth (obtained by AFC data with tap-out information) provides validation for the model. 

However, in this study, the CTA system does not have tap-out information because it is an open system, implying that the ground truth OD exit flow is not available. But CTA adopted a destination system called ``ODX''. The ODX algorithm is developed by \citet{sanchez2017inference}. It is shorthand for ‘‘origin, destination, and transfer inference algorithm,’’ an extension of the O-D inference algorithm proposed by \citet{zhao2007estimating}. It takes automatically collected data, including AVL and automatic fare collection (AFC), as inputs and infers both destinations and transfers in a tap-on-only transit system, including locations and times. Given a series of tap-in records for a given smart card ID, the tap-out information is inferred as follows: 1) if the current tap-in time is close to the previous tap-in time, the current vehicle ``stage'' is part of a transfer journey from the previous stage and the alighting location of the previous stage is inferred as the closest stop on that route to the boarding location of the current (second) stage; 2) if there is a large time gap between the current tap-on and the previous tap-int, the alighting location of the previous journey is inferred as the closest stop on the previous route to the boarding location of the current journey assuming passengers’ travel patterns are symmetrical and the distance between the inferred alighting location and the subsequent boarding location meets maximum distance criteria. More details can be found in \citet{zhao2007estimating} and \citet{sanchez2017inference}.

We can treat OD exit flow output by ODX as the ground truth. The comparison is based on the data on a normal weekday without incident. We also aggregated the flows by destinations for better visualization. Figure \ref{fig_od_exit_flow} shows the comparison between OD exit flow between 9:00-10:00 AM at the top 10 stations in the analysis area (see Figure \ref{fig_incident}). The flows between simulation and ground truth are consistent, implying that the simulation can well capture the passenger and vehicle dynamics. A more comprehensive validation of the simulation model can be found in \citet{mo2020capacity}, where the case study is based on the Hong Kong Mass Transit Railway system.  

\begin{figure}[H]
\centering
\includegraphics[width = 0.6 \linewidth]{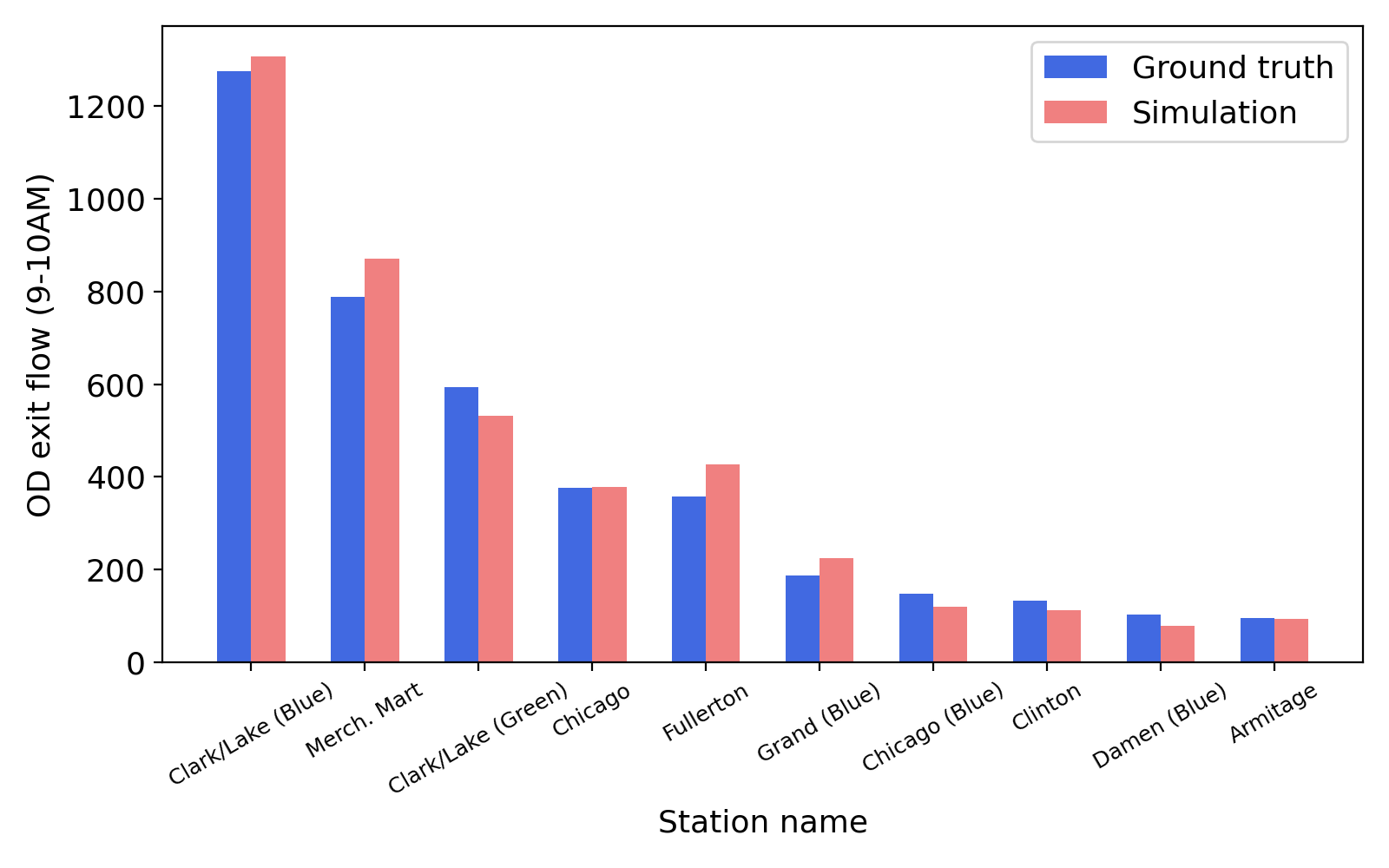}
\caption{Comparison between simulation of ground truth OD exit flows}
\label{fig_od_exit_flow}
\end{figure}

\section{Discussions on the model performance compared to the capacity-based path recommendations}\label{sec_discuss_cap}
The improvement of the proposed method compared to the capacity-based method mainly comes from the ``network-level'' optimization for solving the system optimal flows. The proposed model considers the dynamics between upstream and downstream decisions, which are ignored in the capacity-based path recommendations. For example, in the capacity-based path recommendation, when calculating the available capacity for downstream passengers, the possibly occupied capacity by the recommended upstream passengers is not captured. This is because capacity-based path recommendation is a simple heuristic and does not consider the interaction between the recommendations at differentiation stations from the spatial and temporal aspects. However, the optimization-based methods (both nominal and RO models) are able to capture these interactions through network-level optimization. 

In our case study, the improvement compared to the capacity-based method is not significant (around 5\% for incident line passengers, see Table \ref{tab_result}). The reason is that there are not many upstream and downstream interactions in the Blue Line case study (see Figure \ref{fig_incident}). Only the parallel bus line has this problem (i.e., the recommendation from upstream passengers may occupy the capacity of downstream passengers). The NS and WE buses have independent lines to connect to the Green and Brown Lines, respectively. And Green and Brown lines have enough capacity to serve passengers from the Blue Line. Therefore, we did not see significant improvement. 

This can be further evidenced by the path shares comparison (Figure \ref{fig_path_share_comp}). The path shares shown in the figure are the weighted average over all OD pairs and time intervals with weights equal to the corresponding demand. We observe that the optimization model de-prioritizes the use of parallel buses. However, the capacity-based model, since cannot capture the upstream and downstream recommendation interactions, over-recommends passengers to the parallel buses. It is worth noting that, when there are no upstream and downstream interactions, the capacity-based path recommendation can be very close to the “system-optimal” path shares (if the travel times of alternative paths are similar). 

\begin{figure}[H]
\centering
\includegraphics[width = 0.6 \linewidth]{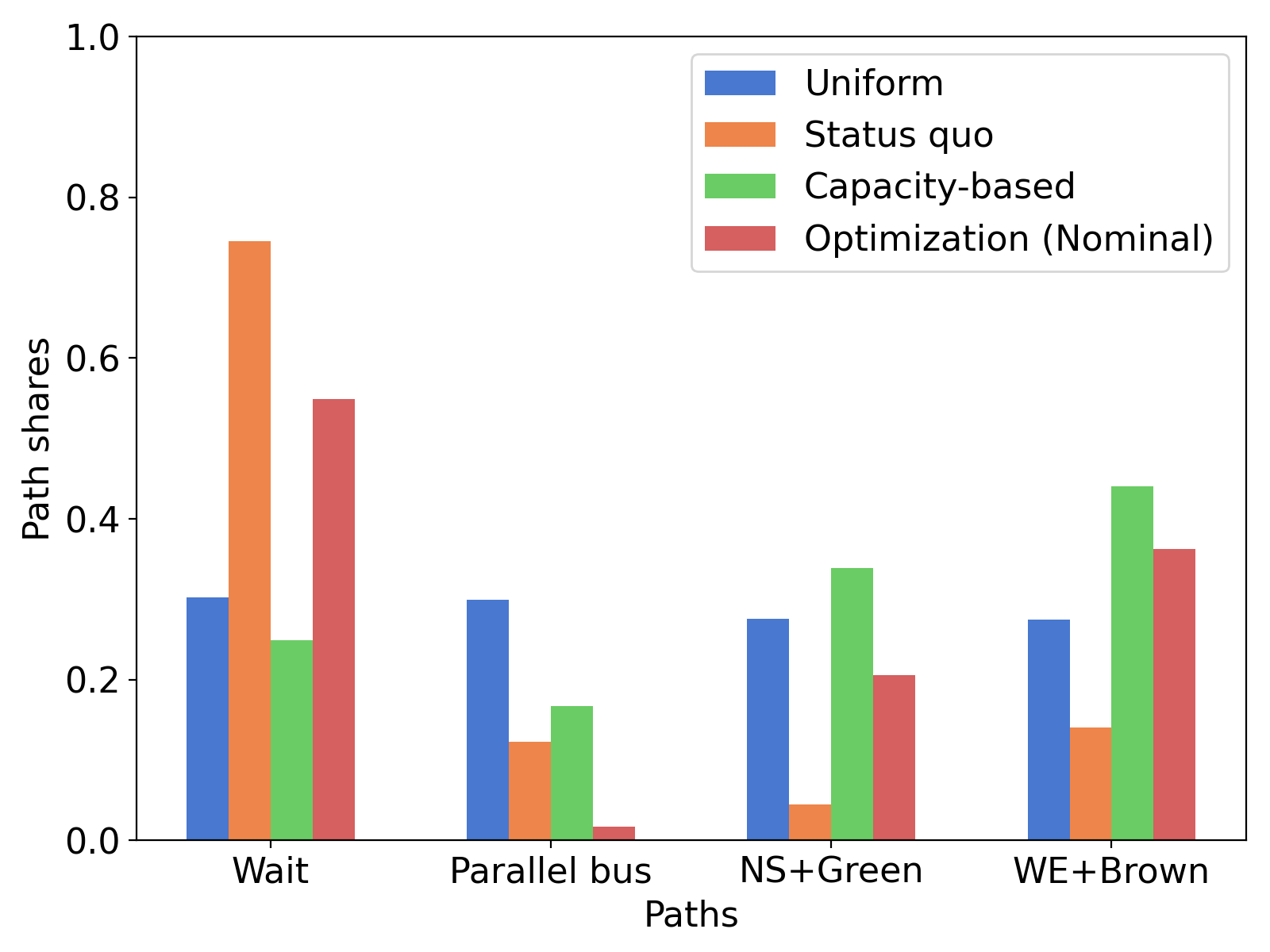}
\caption{Path shares comparison for different recommendation strategies}
\label{fig_path_share_comp}
\end{figure}

\bibliography{mybibfile}

\end{document}